\documentclass{amsart}
\usepackage{amsmath,amsthm}
\usepackage{amsfonts,amssymb}
\usepackage{accents}
\usepackage{enumerate}
\usepackage{accents,color}
\usepackage{graphicx}
\usepackage{todonotes}
\usepackage{bm}

\hfuzz1pc

\addtolength{\textwidth}{0.5cm}

\newcommand{\lvt}{\left|\kern-1.35pt\left|\kern-1.3pt\left|}
\newcommand{\rvt}{\right|\kern-1.3pt\right|\kern-1.35pt\right|}

\newtheorem{thm}{Theorem}[section]
\newtheorem{cor}[thm]{Corollary}

\newtheorem{prop}[thm]{Proposition}

\theoremstyle{remark}
\newtheorem{rem}{Remark}[section]

 \def\la{{\langle}}
 \def\ra{{\rangle}}

 \def\d{\mathrm{d}}

 \def\a{{\alpha}}
 \def\b{{\beta}}
 \def\g{{\gamma}}

 \def\l{{\lambda}}
 
 \def\s{\sigma}
 \def\la{{\langle}}
 \def\ra{{\rangle}}

 \def\CB{{\mathcal B}}

 \def\CL{{\mathcal L}}

 \def\CV{{\mathcal V}}

 \def\NN{{\mathbb N}}

 \def\RR{{\mathbb R}}

 \def\YY{{\mathbb Y}}

\def\lla{\langle{\kern-2.5pt}\langle}      
\def\rra{\rangle{\kern-2.5pt}\rangle}

\newcommand{\wh}{\widehat}

\graphicspath{{./}}
\begin{document}

\title{Orthogonal polynomials on planar cubic curves}

\author{Marco Fasondini}
\address{Department of Mathematics\\
Imperial College\\
 London \\
 United Kingdom  }\email{m.fasondini@imperial.ac.uk}

\author{Sheehan Olver}
\address{Department of Mathematics\\
Imperial College\\
 London \\
 United Kingdom  }\email{s.olver@imperial.ac.uk}

\author{Yuan Xu}
\address{Department of Mathematics\\ University of Oregon\\
    Eugene, Oregon 97403-1222.}\email{yuan@uoregon.edu}

\date{\today}
\keywords{orthogonal polynomials, orthogonal series}
\subjclass[2010]{ 33C50, 35C10, 42C05, 42C10}

\begin{abstract} 
Orthogonal polynomials in two variables on cubic curves are considered, including the case of elliptic curves.  
For an integral with respect to an appropriate weight function defined on a cubic curve, an explicit basis of 
orthogonal polynomials is constructed in terms of two families of orthogonal polynomials in one variable. 
We show that these orthogonal polynomials can  be used to approximate functions with cubic and square 
root singularities, and demonstrate their usage for solving differential equations with singular solutions.
\end{abstract}
\maketitle
 
\section{Introduction}
\setcounter{equation}{0}
 
We study orthogonal polynomials of two variables with respect to an inner product defined on a planar 
cubic curve. This is a continuation of recent work by the last two authors that studies orthogonal polynomials 
on simple one-dimensional geometries embedded in two-dimensional space, including wedges \cite{OX1} and 
quadratic curves \cite{OX2}, as well as higher-dimensional cases constructed via surfaces of revolution 
\cite{OX3,X20a,X20b}. 

It is assumed the cubic curve $\g$ is of the standard form $y^2 = \phi(x)$, where $\phi$ is a cubic 
polynomial of one variable, which includes the standard elliptic curves as a special case. We consider 
polynomials that are orthogonal with respect to the inner product 
$$
    \la f, g \ra_\g = \int_{\Omega_\g} f(x,y) g(x,y) w(x) \d \s(x,y),
$$ 
where $\Omega_\g$ is the set on which the cubic curve $\g$ is defined and $w$ is an appropriate weight 
function, and the inner product is well defined on the space $\RR[x,y]/\la y^2 - \phi(x)\ra$. These orthogonal 
polynomials are algebraic polynomials of two variables, but their structure is determined by the  
characteristics of the curve. In particular, the dimension of the space of the orthogonal polynomials of degree
$n$ is 3 for all $n \ge 3$, so that it does not increase with $n$, as with orthogonal polynomials of two variables 
on either an algebraic surface or on a domain with non-empty interior \cite{DX}. 

Our main result shows that the orthogonal structure on the cubic curve can be understood, by making use 
of symmetry, through a mixture of two univariate orthogonal systems. To wit, we are able to construct 
orthogonal polynomials on the curve explicitly in terms of univariate orthogonal polynomials. Moreover, this 
structural connection propagates to quadrature rules and polynomial interpolation based on the roots of the orthogonal
polynomials. Hence, we have developed a toolbox for the computational and analytical 
study of functions on cubic curves. We provide two applications to showcase the usage of our results. 
The first is the approximation of functions with cubic singularities. We compare the results to Hermite--Pad\'e approximation, a common technique for approximating functions with singularities, demonstrating that our approximation converges faster and is more robust to degeneracies. The 
second application is differential equations, which demonstrates the effectiveness of a spectral 
collocation method, based on our toolbox, for solving differential equations with singular solutions. The examples include 
the computation of an elliptic integral that can be expressed in terms of the Legendre's incomplete integral of the first kind.

The paper is organized as follows. The orthogonal structure on the cubic curve is established in the next section,
where we clarify the families of cubic curves that we consider, which leads to several distinguished cases, 
and show how orthogonal polynomials can be constructed explicitly in terms of univariate orthogonal polynomials; 
the section also contains several families of examples. In the third section we consider quadrature rules on 
the cubic curve as well as polynomial interpolation based on the nodes of the quadrature rules, both from a 
theoretical and a computational perspective. The applications are discussed in the fourth section and the final section is on possible future work.

{\bf Acknowledgment}. The first and second authors were supported by the Leverhulme Trust Research Project Grant RPG-2019-144 ``Constructive approximation theory on and inside algebraic curves and surfaces''.

\section{Orthogonal polynomials on cubic curves}\label{sect:OPscubiccurves}
\setcounter{equation}{0}

\subsection{Cubic curves}\label{sect:cubiccurves}
Throughout this paper we let $\phi$ be a cubic polynomial defined by 
\begin{equation}\label{eq:cubic-curve}
  \phi(x) = a_0 x^3 + a_1 x^2 + a_2 x + a_3, \qquad a_i \in \RR, \quad a_0 \ne 0.
\end{equation}
We consider the cubic curve $\g$ on the plane defined by the standard form
$$
    y^2 = \phi(x), \qquad (x,y) \in \RR^2,  
$$
since all irreducible bivariate cubics can be transformed\footnote{While the transformation to canonical form will not necessarily map polynomials to polynomials, it will still provide an orthogonal expansion on other cubic curves.}  into this form~\cite{bix,BM}. We let $\g = \{(x,y): y^2 = \phi(x)\}$ be
the graph of the curve. Without loss of generality, we assume that $a_0>0$, so that $\phi(x) > 0$ for sufficiently 
large $x > 0$. Let 
$$
   \Omega_\g:= \{x: \phi(x) > 0\},
$$
which is the set on which the cubic curve is defined. The cubic polynomial $\phi$ can have either one 
real zero or three real zeros, so that $\Omega_\g$ can be either one interval or the union of two
intervals. This leads to three possibilities: 
\begin{enumerate}[    (I)]
\item $\Omega_\g = (A,\infty)$: the curve has one component; 
\item $\Omega_\g = (A_1, B_1) \cup (A_2, \infty)$ with $A_1 < B_1 < A_2$: the curve has two disjoint components; 
\item $\Omega_\g = (A,B) \cup (B,\infty)$: the curve has two touching components. 
\end{enumerate}
In the first case, $\phi$ has one real zero $A$. In the second case, $\phi$ has three real
zeros $A_1 < B_1 < A_2$. In the third case, $\phi$ has a real zero at $A$ and a double zero at $B$.
For examples, see Figures 1 and 2 below.

One important family of cubic curves included in our definition is that of {\it elliptic curves}. 
An elliptic curve is a plane curve defined by 
\begin{equation} \label{eq:elliptic1}
     y^2 = x^3 + a x + b, 
\end{equation}
where $a$ and $b$ are real numbers and the curve has no-cusps, self-intersections, or isolated points. This 
holds if and only if the discriminant
$$
\Delta_E = -16(4a^{3}+27b^{2})
$$
is not equal to zero. The graph has two components if $\Delta_E > 0$ and one component if $\Delta_E < 0$.
Two elliptic curves are depicted in Figure 1, the left-hand one has one component, whereas the right-hand one 
has two components. 
\begin{figure}[ht]
  \begin{center}
    \includegraphics[width=0.45\textwidth]{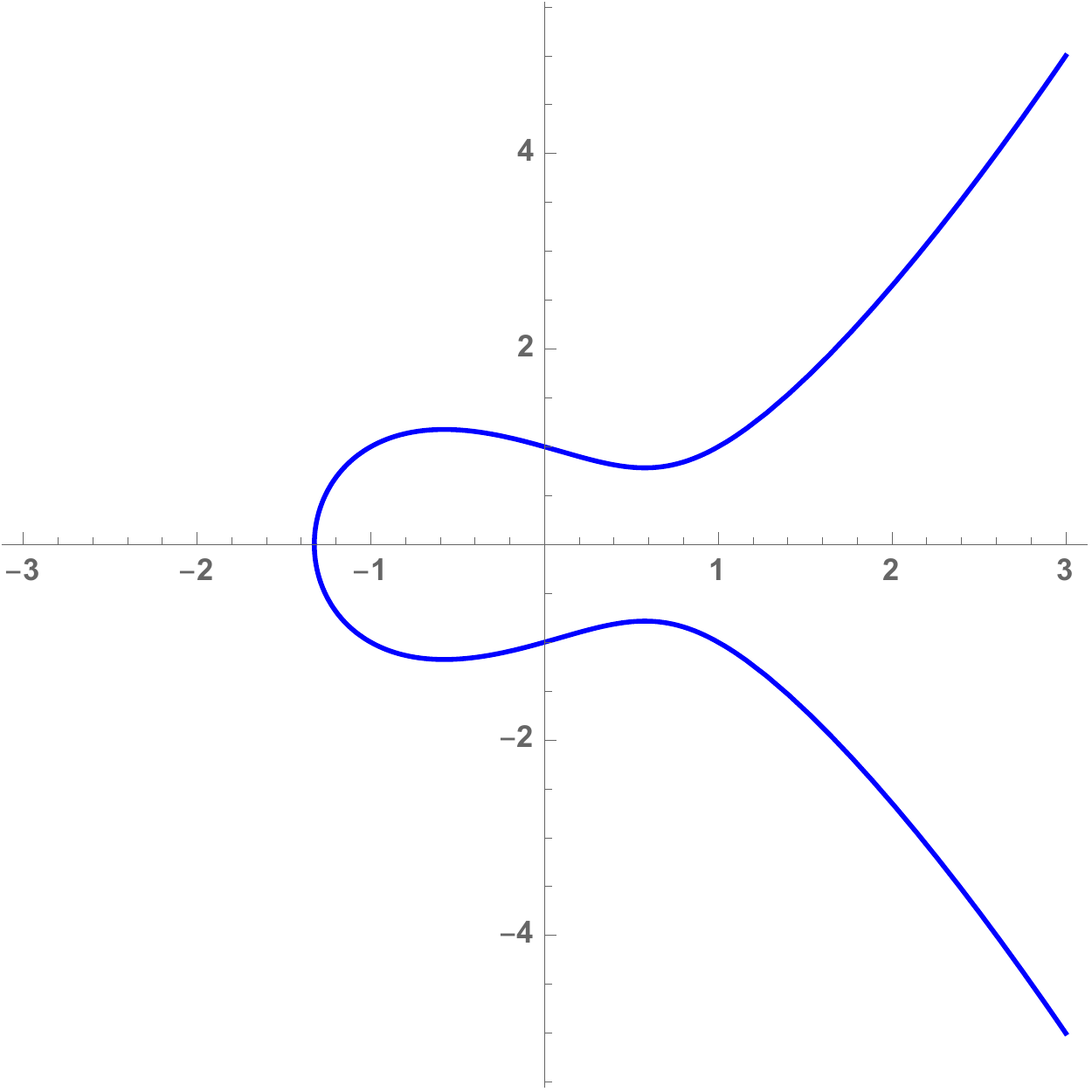} \qquad \includegraphics[width=0.45\textwidth]{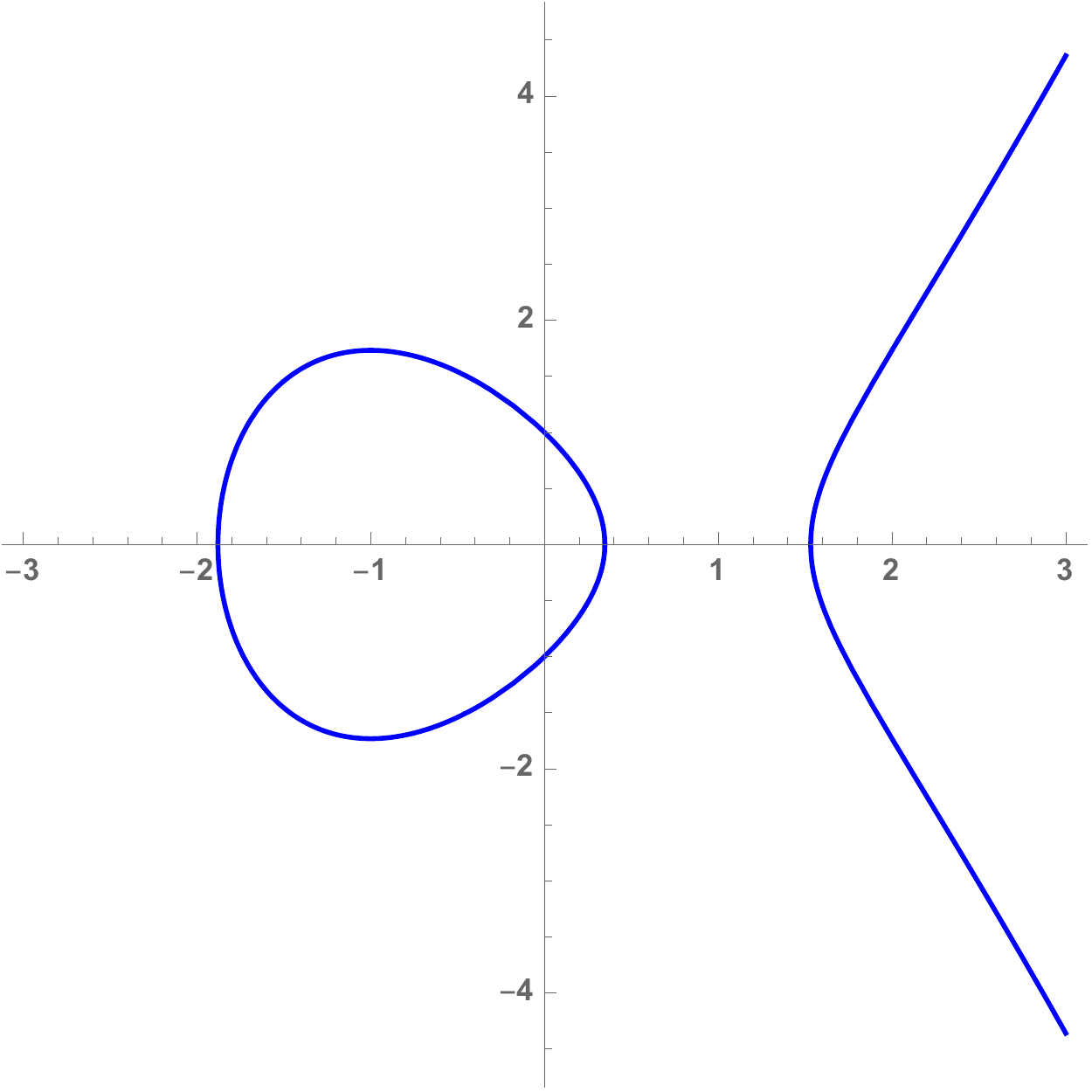}
  \end{center} 
  \caption{Elliptic curves. Left: $y^2=x^3-x+1$. Right: $y^2=x^3-3x+1$}
\end{figure}

We can write the elliptic curve in a different form. Let $\phi(x) = x^3 + a x + b$. By the definition, $\phi(A) = 0$ 
or $b = - A^3 - a A$, so that 
$$
  \phi(x) =  x^3 + a x - A^3 - a A = (x-A) (x^2 + A x +A^2+a). 
$$
We need $x^2 + A x + A^2 + a \ge 0$ for $x \ge A$, which holds only if its discriminant $A^2- 4 (A^2+a) < 0$ 
or $a > - (3/4)A^2$. Under this condition, we have 
$$
 4 a^3 + 27 b^2 = 4 a^3 + 27 A^2 (A^2+a)^2 > - 4 \frac{3^3}{4^3} A^6 +27 A^2 \left(\frac{A^2}{4}\right)^2 = 0,
$$ 
so that the elliptic curve does indeed have one component. Thus, setting $c = a + \frac{3}{4}A^2$ and 
$b = - A^3 - a A$ we see that the elliptic curve \eqref{eq:elliptic1} becomes 
\begin{equation} \label{eq:elliptic2}
      y^2 = (x-A) \left( \left (x+\tfrac{A}{2}\right)^2 + c\right), \qquad A\in \RR.
\end{equation}
If $c > 0$, then $\phi(x) = (x-A) (x+\frac{A}{2})^2 + c$ has one real zero, so that the elliptic curve has one
component. If $c < 0$, then the curve has two components. One of the advantages of writing the curve in 
the form \eqref{eq:elliptic2} is that the roots of $\phi$ are explicitly given. 

We also consider cubic curves that are not elliptic. For example, we can have cubic curves of the form 
$y^2 = a (x^3 - b x^2)$, which will self intersect when $b>0$ and will be treated as two components that 
touch at one point. One example of such curves is depicted in Figure 2. 

As an example of the third case, that of closed cubic curves, we mention tear drop curves defined by 
$$
     (2a)^2 y^2 = (x-a)^2 (x+a), \qquad -a < x < a. 
$$
For $a=1$, this curve is inside the unit circle and is depicted in Figure 2. 
\begin{figure}[ht]
  \begin{center}
    \includegraphics[width=0.38\textwidth]{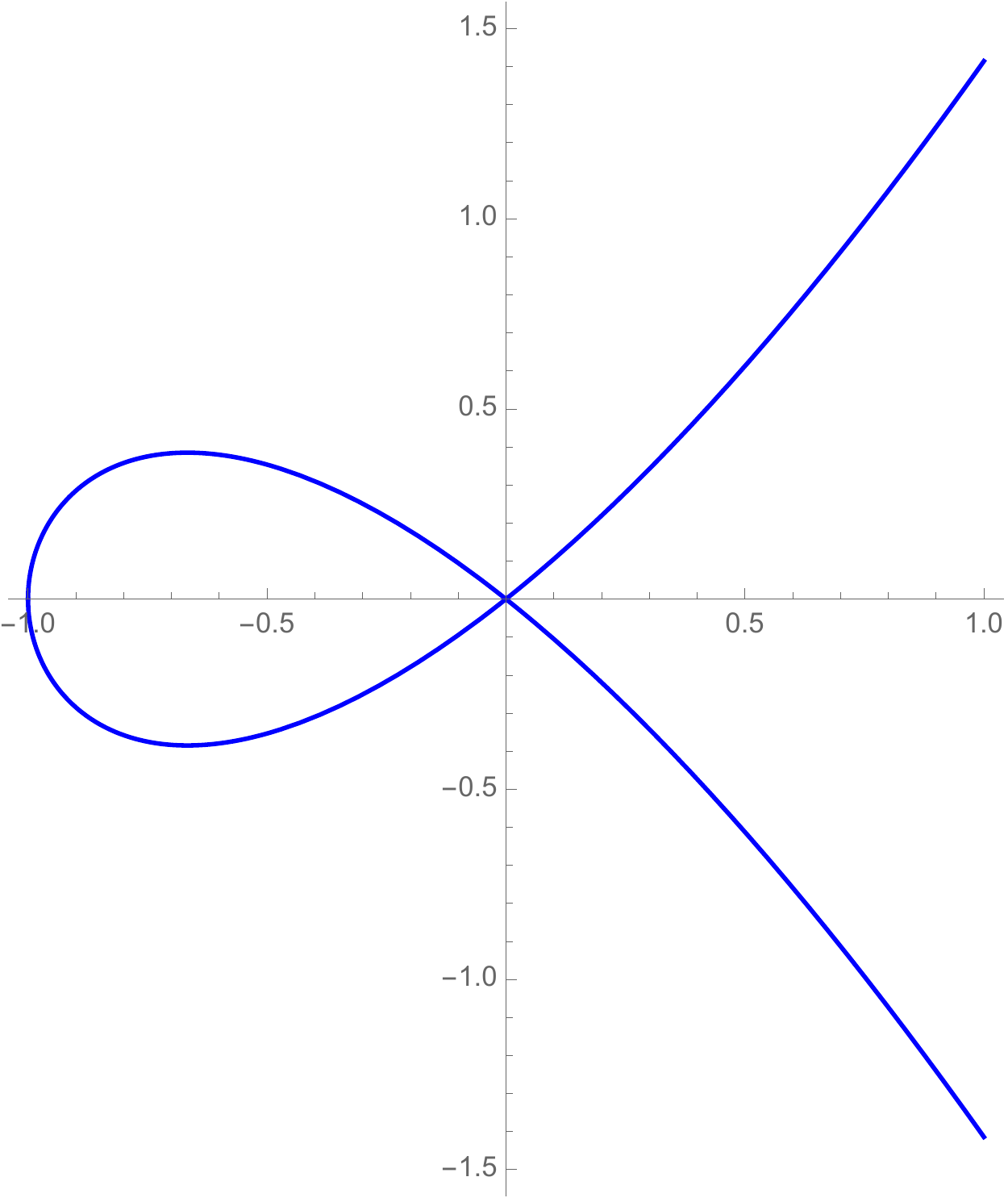} \qquad\quad  \includegraphics[width=0.45\textwidth]{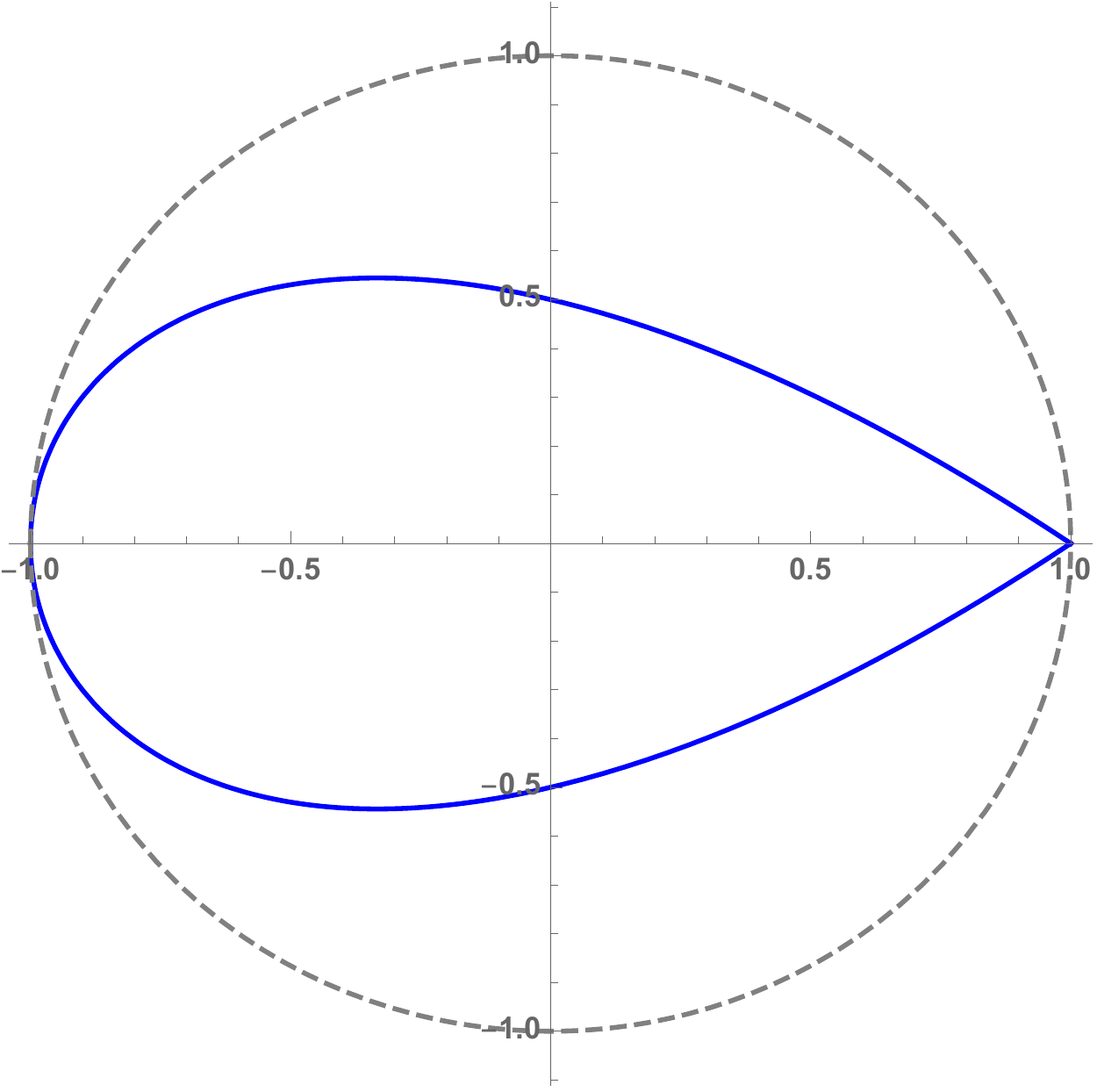}
  \end{center} 
  \caption{Left:  $y^2= x^2(x+1)$. Right:  the tear drop curve \mbox{$4 y^2= (1-x)^2 (1+x)$}}
\end{figure}

Our definition also include the curve $y^2 = x^3$, which is the case of $a= b = 0$ in \eqref{eq:elliptic1},
but the curve has a singular point and is not an elliptic curve. 

\subsection{Orthogonal polynomials on cubic curves}
Let $y^2=\phi(x)$ be a cubic curve and let $w$ be a non-negative weight function defined on $\Omega_\g$. 
We consider orthogonal polynomials of two variables that are orthogonal with respect to an inner product 
defined, on an appropriate polynomial subspace, by 
\begin{equation} \label{eq:ipd1}
  \la f,g\ra_{\g,w} = \int_{\g} f(x,y) g(x,y) w(x) \d \s(x,y),
\end{equation}
where $\d \s$ is the arc length measure on the curve. Depending on the support set of $w$, the integral
domain could be compact in the cases I and II. 

The bilinear form $\la \cdot,\cdot\ra_{\g,w}$ defines an inner product on the space $\RR[x,y] / \la y^2 - \phi(x)\ra$. 
For $n\ge 3$, the monomials of degree exactly $n$, $x^k y^{n-k}$ for $0 \le k \le n$, remain of degree $n$ 
 modulo the ring $\la y^2 - \phi(x)\ra$ only when $k =0,1,2$. In particular, this shows that 
$\CB_n = \{y^n, x y ^{n-1}, x^2 y^{n-2}\}$ is a basis of the space of polynomials of degree exactly $n$ in 
$\RR[x,y] / \la y^2 - \phi(x)\ra$. 

Let $\CV_n:=\CV_n(\g,w)$ be the space of orthogonal polynomials of degree $n$ in two variables with respect to this 
inner product. Applying the Gram--Schmidt process to the basis $\CB_n$, for example, inductively on $n$, 
we obtain the following proposition:

\begin{prop}\label{prop:dimVn}
For $n \in \NN_0$, we have $\dim \CV_0 =1$, $\dim \CV_1 =2$ and 
$$
   \dim \CV_n =3, \quad n \ge 2.
$$
\end{prop}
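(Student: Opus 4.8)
The plan is to translate the statement into a dimension count in the filtered quotient ring and then use the fact that $\la\cdot,\cdot\ra_{\g,w}$ is a genuine inner product to transfer that count to the orthogonal polynomials. Write $\CP_n$ for the image in $\RR[x,y]/\la y^2-\phi(x)\ra$ of all polynomials of total degree at most $n$; then by definition $\CV_n=\CP_n\cap\CP_{n-1}^{\perp}$, the degree-$n$ polynomials orthogonal to everything of lower degree. Because $\la\cdot,\cdot\ra_{\g,w}$ is positive definite on the whole quotient, its restriction to each finite-dimensional $\CP_n$ is non-degenerate, so $\CP_{n-1}$ has an orthogonal complement in $\CP_n$ of the full dimension and
\[
  \dim\CV_n=\dim\CP_n-\dim\CP_{n-1}.
\]
Equivalently, running Gram--Schmidt on a basis ordered by degree can never annihilate a class, so the number of new orthogonal polynomials created at level $n$ equals $\dim(\CP_n/\CP_{n-1})$. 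Everything therefore reduces to computing these graded dimensions.

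To evaluate $\dim(\CP_n/\CP_{n-1})$ I would track how the relation $y^2=\phi(x)$ affects total degree. The substitution trades a factor $y^2$, of degree $2$, for $\phi(x)$, of degree $3$; read the other way, it lets one replace $x^3$ by a combination of $y^2$ and lower powers, which strictly lowers total degree (here $a_0\neq0$ is essential). Consequently a minimal-degree representative keeps the $x$-power at most $2$, and the monomials $x^ky^{n-k}$ that genuinely remain of degree $n$ are exactly those with $k\in\{0,1,2\}$. This is precisely the basis statement for $\CB_n=\{y^n,\,xy^{n-1},\,x^2y^{n-2}\}$ recorded before the proposition, valid for $n\ge3$, so $\dim(\CP_n/\CP_{n-1})=3$ there. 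The low-degree pieces are then read off directly: $\CP_0$ is spanned by the constants, and $\CP_1/\CP_0$ by the classes of $x$ and $y$, of dimensions $1$ and $2$ respectively.

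It remains to pin down $n=2$, which I expect to be the only delicate point. Naively the degree-two monomials are $x^2,\,xy,\,y^2$, and one must check that their classes are independent modulo $\CP_1$: they are, because in minimal-degree form $x^2$, $xy$ and $y^2$ are three distinct reduced monomials of degree $2$ (no cancellation is possible, again using $a_0\neq0$), so $\dim(\CP_2/\CP_1)=3$. Collecting the cases gives $\dim\CV_0=1$, $\dim\CV_1=2$ and $\dim\CV_n=3$ for all $n\ge2$. The genuine content of the argument is the non-degeneracy step that guarantees Gram--Schmidt produces the full predicted number of orthogonal polynomials at every level; the degree bookkeeping, already established for $n\ge3$, only has to be supplemented by the small cases.
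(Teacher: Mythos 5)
Your proof is correct and is essentially the paper's own argument: the paper obtains the proposition by applying Gram--Schmidt inductively to the monomial bases $\CB_n=\{y^n,\,xy^{n-1},\,x^2y^{n-2}\}$ of the degree-$n$ pieces of $\RR[x,y]/\la y^2-\phi(x)\ra$, which is exactly your combination of the graded dimension count (reduction of $x^3$ via the relation, using $a_0\neq 0$) with positive definiteness of $\la\cdot,\cdot\ra_{\g,w}$. Your explicit check of the case $n=2$, where all three monomials $x^2, xy, y^2$ are already in reduced form, is a detail the paper leaves implicit, but the route is the same.
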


Let $p_n(w)$ be a family of univariate orthogonal polynomials with respect to the standard inner product on $\Omega_\g$,
\begin{align} \label{eq:ipduni}
  \la f,g \ra_{w} = \int_{\Omega_\g}    f\left(x \right) g\left(x \right) w(x) \d x. 
\end{align}
 In particular,
$p_n(\phi w)$ denotes orthogonal polynomials with respect to $\phi(x) w(x)$ on $\Omega_\g$. A basis for $\CV_n$ can be given explicitly in terms of orthogonal polynomials with respect to $w$ and $\phi w$. We parametrize the inner product (\ref{eq:ipd1}) as 
\begin{align} \label{eq:ipd2}
  \la f,g \ra_{\g,w} = \int_{\Omega_\g} & \left[  f\left(x, \sqrt{\phi(x)} \right) g\left(x, \sqrt{\phi(x)} \right) \right. \\
      & \left. + f\left(x, - \sqrt{\phi(x)} \right) g\left(x, -\sqrt{\phi(x)} \right)\right] w(x) \d x. \notag
\end{align}
More precisely, the domain $\Omega_\g$ in the integral should be replaced by $\mathrm{supp}(w) \subset \Omega_\g$,
where $\mathrm{supp}(w)$ denotes the support set of $w$. For example, in case $I$, we could choose $w$ 
so that it has support set $[A,B]$ for some $B \in \RR$.

We now define an 
explicit basis for the space $\CV_n(w)$ of orthogonal polynomials on the cubic curve $\g$. We denote this basis by 
$Y_{n,i}$ and denote its squared norm by $H_{n,i}$. The squared norm of $p_n(w;x)$ is denoted by $h_n(w) = \la p_n, p_n \ra_{w}$.

\begin{thm} \label{thm:OPbasis}
Let $\g$ be a cubic curve and let $w$ be a weight function defined on $\Omega_\g$.
\begin{enumerate}[1.]
\item For $n =0$ and $n=1$, we define
$$
Y_0(x,y) =1, \qquad Y_{1,1}(x,y) = p_1(w;x), \qquad Y_{1,2}(x,y)=y.
$$
Then $\CV_0 = \mathrm{span} \{Y_0\}$ and $\CV_1 =  \mathrm{span} \{Y_{1,1}, Y_{1,2}\}$. Moreover,
$$
   H_0 = 2 h_0(w), \qquad H_{1,1} = 2 h_1(w), \qquad H_{1,2} = 2 h_0(\phi w).
$$
\item For $m \in \NN_0$ and $m \ge 1$, we define 
\begin{align*}
  Y_{2m,1}(x,y) &\, = p_{3m}(w;x), \\
  Y_{2m,2}(x,y) &\,= p_{3m-1}(w;x), \\
  Y_{2m,3}(x,y) &\,= y p_{3m-2}(\phi w; x),
\end{align*}
and 
\begin{align*}
  Y_{2m+1,1}(x,y) &\, = p_{3m+1}(w;x), \\
  Y_{2m+1,2}(x,y) &\,= y p_{3m}(\phi w;x), \\
  Y_{2m+1,3}(x,y) &\,= y p_{3m-1}(\phi w; x).
\end{align*}
Then $Y_{n,i}$ is a polynomial of degree $n$ in $\RR[x,y] / \la y^2 - \phi(x)\ra$ for $i=1,2,3$ and
$$
\CV_n = \mathrm{span} \{Y_{n,1}, Y_{n,2}, Y_{n,3} \}, \qquad n \ge 2.
$$
Moreover,  the norms of these polynomials are given by
\begin{align*}  
H_{2m,1} & = 2 h_{3m}(w), \quad H_{2m,2} = 2 h_{3m-1}(w), \quad H_{2m,3} = 2 h_{3m-2}(\phi w),\\
H_{2m+1,1} & = 2 h_{3m+1}(w), \quad H_{2m+1,2} = 2 h_{3m}(\phi w), \quad H_{2m+1,3} = 2 h_{3m-1}(\phi w).
\end{align*}
\end{enumerate}
\end{thm}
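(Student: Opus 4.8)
The plan is to pass to the normal form afforded by the quotient ring and to show that the inner product \eqref{eq:ipd2} decouples into two univariate inner products, after which the whole statement reduces to bookkeeping on the degrees.

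\emph{Normal form and splitting of the inner product.} Since $y^2 \equiv \phi(x)$ in $\RR[x,y]/\la y^2 - \phi(x)\ra$, every element has a unique representative $A(x) + y\,B(x)$ with $A,B$ univariate. First I would substitute $f = A_1 + y B_1$ and $g = A_2 + y B_2$ into \eqref{eq:ipd2}: evaluating at $y = \pm\sqrt{\phi(x)}$ and adding, the two cross terms carrying a single factor $\sqrt{\phi}$ cancel, leaving
\begin{align*}
  \la f, g\ra_{\g,w}
  &= 2\int_{\Omega_\g} \big[ A_1 A_2 + \phi\, B_1 B_2\big] w \,\d x \\
  &= 2\la A_1, A_2\ra_w + 2\la B_1, B_2\ra_{\phi w}.
\end{align*}
Because $\phi > 0$ on $\Omega_\g$ and $w \ge 0$, the weight $\phi w$ is admissible, so $p_k(\phi w)$ is well defined. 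This identity is the structural heart of the proof, as it turns bivariate orthogonality into two decoupled univariate problems and already yields every norm formula: a pure term $A = p_k(w)$ has $\la p_k, p_k\ra_{\g,w} = 2 h_k(w)$, while $y\,B$ with $B = p_k(\phi w)$ has norm $2 h_k(\phi w)$.

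\emph{Degree filtration.} The subtle step is to identify, for each $n$, which representatives $A + yB$ have degree $\le n$ in the quotient. Writing the relation as $x^3 \equiv a_0^{-1}(y^2 - a_1 x^2 - a_2 x - a_3)$, one trades three units of $x$-degree for two units of ambient degree; tracking this carefully I expect to show that the image $V_{\le n}$ of the polynomials of total degree $\le n$ consists exactly of those $A + yB$ with $\deg A \le \lfloor 3n/2\rfloor$ and $\deg B \le \lfloor 3(n-1)/2\rfloor$. Equivalently, $p_k(w)$ has quotient-degree $\lceil 2k/3\rceil$ and $y\,p_k(\phi w)$ has quotient-degree $\lceil 2k/3\rceil + 1$. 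This floor/ceiling accounting, split according to the parity of $n$, is where the bulk of the care is needed and is the main obstacle; it is also what explains the jumps $3m,\,3m-1,\,3m-2$ in the indices of the basis, and as a consistency check it recovers $\dim\CV_n = 3$ from Proposition~\ref{prop:dimVn}.

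\emph{Solving the decoupled orthogonality and assembling $\CV_n$.} By the splitting, $P = A + yB \in \CV_n$ if and only if $A$ is $\la\cdot,\cdot\ra_w$-orthogonal to every $A$-part of $V_{\le n-1}$ and $B$ is $\la\cdot,\cdot\ra_{\phi w}$-orthogonal to every $B$-part of $V_{\le n-1}$, subject to the degree bounds from $V_{\le n}$. Using the filtration, the admissible $A$ are the polynomials $p_k(w)$ with $k$ in the narrow range left between $\lfloor 3(n-1)/2\rfloor$ and $\lfloor 3n/2\rfloor$, and the admissible $B$ are the polynomials $p_k(\phi w)$ in the analogous range. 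Carrying out this count for $n = 2m$ and $n = 2m+1$ separately produces precisely the lists $\{p_{3m}(w),\, p_{3m-1}(w),\, y\,p_{3m-2}(\phi w)\}$ and $\{p_{3m+1}(w),\, y\,p_{3m}(\phi w),\, y\,p_{3m-1}(\phi w)\}$. These three elements are of distinct normal-form type or distinct univariate degree, hence linearly independent, and since $\dim\CV_n = 3$ for $n \ge 2$ they form a basis; the cases $n = 0,1$ are read off directly from the same formulas. Finally the norms $H_{n,i}$ follow by substituting each $Y_{n,i}$ into the decomposition of the first step.
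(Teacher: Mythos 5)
Your proposal is correct and takes essentially the same route as the paper's proof: your splitting of the inner product on normal forms $A+yB$ is precisely the paper's parity relation \eqref{eq:ipd-parity} combined with \eqref{eq:xip1}--\eqref{eq:xyip1}, and your degree filtration is the paper's substitution $x^3 = \psi(x,y) = a_0^{-1}(y^2 - a_1x^2 - a_2x - a_3)$, recorded after the theorem as \eqref{eq:Pi-g}. The only organizational difference is that you derive the basis by solving the decoupled orthogonality conditions within the filtration, whereas the paper verifies the stated list directly and concludes spanning from $\dim\CV_n = 3$.
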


\begin{proof}
For $n \ge 2$, we first show that $Y_{n,i}$ is of degree $n$ in $\RR[x,y] / \la y^2 - \phi(x)\ra$. Throughout 
this proof, we introduce a function $\psi(x,y)$ so that the equation of the cubic curve becomes
$$
     \psi (x,y) = x^3, \qquad \psi(x,y) = a_0^{-1}( y^2 - a_1 x^2-a_2 x - a_3).
$$
Now, for $n =2m$, we write
\begin{align*}
& p_{3m}(w;x) = \sum_{k=0}^{3m} b_k x^k  = b_0 + b_1 x + \sum_{j=1}^{m} b_{3j-1} x^{3j-1} +
  \sum_{j=1}^{m} b_{3j} x^{3j} +  \sum_{j=1}^{m-1} b_{3j+1} x^{3j+1} \\
  &\qquad = b_0 + b_1 x + \sum_{j=1}^{m} b_{3j-1} (\psi(x,y))^{j-1}x^2 +
  \sum_{j=1}^{m} b_{3j} (\psi(x,y))^j + \sum_{j=1}^{m-1} b_{3j+1} x (\psi(x,y))^j, 
\end{align*}
which is a polynomial of degree $2m$ in $x,y$ variables. The same argument also shows that $p_{3m-1}(w;x)$
is a polynomial of degree $2m$ in $\RR[x,y] / \la y^2 - \phi(x)\ra$. Furthermore, it follows that $p_{3m-2}(w;x)$
is a polynomial of degree $2m-1$ in $\RR[x,y] / \la y^2 - \phi(x)\ra$, which shows that $Y_{2m,3}$ is a 
polynomial of degree $2m$ mod $ \la y^2 - \phi(x)\ra$. A similar argument works for $Y_{2m+1,i}$.

We now verify orthogonality. The $Y_{n,i}$ are of two forms: either $Y_{n,i}(x,y) = f(x)$ or $Y_{n,i}(x,y) = yg(x)$.   due to symmetry in the expression of $\la \cdot,\cdot\ra_{\g,w}$
in \eqref{eq:ipd2},
\begin{equation} \label{eq:ipd-parity}
  \la f(x), y g(x) \ra_{\g,w} = 0. 
\end{equation} 
This establishes orthogonality between all $Y_{n,i}$ of the form $f(x)$ and those of the form $yg(x)$. To prove orthogonality between the remaining $Y_{n,i}$ (those that are both of the form $f(x)$ and those that are both of the form $yf(x)$), we relate the bivariate inner product on $\g$ to the univariate inner product on $\Omega_{\gamma}$ in both cases:  observe that
\begin{align}
\la f(x),  g(x) \ra_{\g,w} = 2\la f(x),  g(x) \ra_{w}, \label{eq:xip1}
\end{align}
and
\begin{align}
\la yf(x),  yg(x) \ra_{\g,w} = 2\la y^2f(x),  g(x) \ra_{w}= 2\la \phi f(x),  g(x) \ra_{w} = 2\la f(x),  g(x) \ra_{\phi w}.  \label{eq:xyip1}
\end{align}
Thus, the orthogonality between the remaining $Y_{n,i}$ follow from the orthogonality of the $p_n(w)$ and $p_n(\phi w)$. Hence we have showed that the $Y_{n,i}$ form a basis for $\CV_n$, $n \geq 0$. It follows from the last two equations that if  $Y_{n,i} = p_k(w)$, then  $H_{n,i} = \la Y_{n,i}, Y_{n,i} \ra_{\g,w}= 2\la p_k(w), p_k(w) \ra_{w} = 2h_{k}(w)$. If $Y_{n,i} = yp_k(\phi w)$, then $H_{n,i} = 2\la p_k(\phi w), p_k(\phi w) \ra_{\phi w} = 2h_{k}(\phi w)$.
\end{proof}

\subsection{Fourier orthogonal series}
For $w$ defined on $\RR$, the Fourier orthogonal series in terms of orthogonal polynomials $\{p_n(w)\}$ is 
defined by 
$$
   f = \sum_{n=0}^\infty \wh f_n(w) p_n(w), \qquad p_n(w) = \frac{1}{h_n(w)} \int_\RR f(t) p_n(w;t) w(t) \d t,
$$
where the identity holds in $L^2(w)$ as long as polynomials are dense in $L^2(w)$, which we assume to be
the case. Furthermore, let $s_n(w;f)$ denote the $n$-th orthogonal partial sum of this expansion; that is, 
$$
 s_n(w;f) = \sum_{k=0}^n \wh f_k(w) p_k(w), \qquad n =1,2,\ldots.
$$

Likewise, let $\g$ be a cubic curve and $w$ be a weight function defined on $\g$, we define the Fourier 
orthogonal series of $f\in L^2(\g, w)$ by
$$
  f = \wh f_0 Y_0 +  \wh f_{1,1}Y_{1,1} +  \wh f_{1,2}Y_{1,2} +
  \sum_{n=2}^\infty \sum_{i=1}^3 \wh f_{n,i} Y_{n,i}, \quad \hbox{with} \quad 
\wh  f_{n,i} =\frac{ \la f, Y_{n,i}\ra_{\g,w}}{H_{n,i}(w)},
$$
The $n$-th partial sum of this expansion is denoted by $S_n(w;f)$; that is 
$$
S_n(w; f) = \wh f_0 Y_0 +  \wh f_{1,2}Y_{1,1} +  \wh f_{1,2}Y_{1,2} +
  \sum_{k=2}^n \sum_{i=1}^3 \wh f_{k,i} Y_{k,i}.
$$
The next theorem shows that this partial sum can be written in terms of the partial sums of orthogonal 
series with respect to $w$ and $\phi w$. Let $\|\cdot\|_w$ denote the norm of $L^2(\g,w)$.
%
\begin{thm}\label{thm:partsums}
Let $\g$ be a cubic curve and let $w$ be a weight function defined on $\Omega_\g$. For $f \in 
L^2(\g,w)$, define 
\begin{align} \label{eq:fe-fo}
\begin{split}
  f_e(x):= & \frac{f\left(x, \sqrt{\phi(x)}\right)+ f\left(x, - \sqrt{\phi(x)}\right)}{2}, \\
  f_o(x):= & \frac{f\left(x, \sqrt{\phi(x)}\right)- f\left(x, - \sqrt{\phi(x)}\right)}{2 \sqrt{\phi(x)}}
\end{split}
\end{align}
for $x \in \Omega_\g$. Then
\begin{align*}
\begin{split}
  S_{2m} (w; f, x, y) &= s_{3m} (w; f_e, x) + y s_{3m-2} (\phi w; f_o, x), \qquad (x,y) \in \g, \\
  S_{2m+1} (w; f, x, y) & = s_{3m+1} (w; f_e, x) + y s_{3m} (\phi w; f_o, x), \qquad (x,y) \in \g.
\end{split}
\end{align*}
Furthermore, the $L^2(\g,w)$ norm of $S_n(w;f)$ satisfies
\begin{align}\label{eq:norm-partialsum}
\begin{split}
  \left \|S_{2m}(w; f)\right \|_{\g,w}^2 & = 2 \|s_{3m} (w; f_e) \|_{w}^2 +2 \| s_{3m-2} (\phi w; f_o)\|_{\phi w}^2, \\
  \|S_{2m+1}(w; f)\|_{\g,w}^2 & = 2 \|s_{3m+1} (w; f_e) \|_{w}^2 +2 \| s_{3m} (\phi w; f_o)\|_{\phi w}^2. 
\end{split}
\end{align}
\end{thm}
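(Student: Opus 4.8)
The starting point is the observation that, on the curve $\g$, every $f \in L^2(\g,w)$ may be written as $f(x,y) = f_e(x) + y\, f_o(x)$ with $f_e, f_o$ as in \eqref{eq:fe-fo}: substituting $y = \pm\sqrt{\phi(x)}$ recovers the two values $f(x, \pm\sqrt{\phi(x)})$, so this is just the even/odd decomposition of $f$ under $y \mapsto -y$ adapted to the relation $y^2 = \phi(x)$. I would record this identity first, since it converts every subsequent computation on $\g$ into a computation with the univariate functions $f_e$ and $f_o$, matching the parametrization \eqref{eq:ipd2}.

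Next I would compute the Fourier coefficients $\wh f_{n,i} = \la f, Y_{n,i}\ra_{\g,w}/H_{n,i}$ and show each equals a univariate Fourier coefficient. By the construction in \thmref{thm:OPbasis}, each $Y_{n,i}$ is either of the \emph{even} type $p_k(w;x)$ or the \emph{odd} type $y\, p_k(\phi w;x)$. Using $f = f_e + y f_o$ together with the parity relation \eqref{eq:ipd-parity}, the odd part of $f$ contributes nothing against an even $Y_{n,i}$ and vice versa. Then \eqref{eq:xip1} gives, for $Y_{n,i} = p_k(w)$,
$$
  \la f, p_k(w)\ra_{\g,w} = \la f_e, p_k(w)\ra_{\g,w} = 2\la f_e, p_k(w)\ra_{w},
$$
and since $H_{n,i} = 2 h_k(w)$, the coefficient $\wh f_{n,i}$ equals the univariate coefficient of $f_e$ in the $\{p_k(w)\}$ expansion. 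Similarly, \eqref{eq:xyip1} shows that for $Y_{n,i} = y\, p_k(\phi w)$ the coefficient $\wh f_{n,i}$ equals the univariate coefficient of $f_o$ in the $\{p_k(\phi w)\}$ expansion. Thus $S_n(w;f)$ splits into an $x$-only sum of $p_k(w)$ carrying $f_e$-coefficients and a sum of $y\, p_k(\phi w)$ carrying $f_o$-coefficients.

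The main obstacle, and the step I would treat most carefully, is the index bookkeeping: I must verify that the even-type basis elements of degree $\le 2m$ are exactly $\{p_0(w), \dots, p_{3m}(w)\}$ while the odd-type ones are exactly $\{p_0(\phi w), \dots, p_{3m-2}(\phi w)\}$, with the analogous ranges $3m+1$ and $3m$ for degree $\le 2m+1$. Reading off \thmref{thm:OPbasis}, a block of even degree $2\ell$ contributes $p_{3\ell}(w), p_{3\ell-1}(w)$ and $y\, p_{3\ell-2}(\phi w)$, whereas a block of odd degree $2\ell+1$ contributes $p_{3\ell+1}(w)$ and $y\, p_{3\ell}(\phi w), y\, p_{3\ell-1}(\phi w)$. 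I would check by induction on the degree that, as the blocks accumulate, the $w$-indices fill $\{0,1,2,\dots\}$ with neither gaps nor repetitions up to $3m$ (resp.\ $3m+1$), and likewise the $\phi w$-indices up to $3m-2$ (resp.\ $3m$), the base of the induction being the explicit definitions for $n=0,1$ (with sums of negative upper index read as empty). Matching these index ranges with the coefficients from the previous step yields $S_{2m}(w;f) = s_{3m}(w;f_e) + y\, s_{3m-2}(\phi w; f_o)$ and its odd-degree analogue.

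Finally, for the norm identity I would use that the two pieces of $S_n$, one of the form $g(x)$ and one of the form $y\, h(x)$, are orthogonal in $L^2(\g,w)$ by \eqref{eq:ipd-parity}, so $\|S_n\|_{\g,w}^2$ is the sum of the two squared norms. Applying the isometries \eqref{eq:xip1} and \eqref{eq:xyip1} turns these into $2\|s_{3m}(w;f_e)\|_{w}^2$ and $2\|s_{3m-2}(\phi w; f_o)\|_{\phi w}^2$ respectively, which gives \eqref{eq:norm-partialsum}; the odd-degree case is identical.
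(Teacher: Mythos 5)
Your proof is correct and follows essentially the same route as the paper's: identify each bivariate Fourier coefficient $\wh f_{n,i}$ with a univariate coefficient of $f_e$ or $f_o$ via the parity relation \eqref{eq:ipd-parity} and the identities \eqref{eq:xip1}--\eqref{eq:xyip1}, carry out the index bookkeeping to see the $w$-indices fill $0,\dots,3m$ (resp.\ $3m+1$) and the $\phi w$-indices fill $0,\dots,3m-2$ (resp.\ $3m$), and then split the norm using parity orthogonality. Your explicit use of the decomposition $f = f_e + y f_o$ is only a presentational variant of the paper's direct computation from the definition of $\la\cdot,\cdot\ra_{\g,w}$.
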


\begin{proof}
It follows from the definitions of $f_e$ and $\la \cdot, \cdot \ra_{\g,w}$ that
$$
 \la f, Y_{2m,1} \ra_{\g,w} =  2\la f_e, p_{3m}(w) \ra_{w},
$$
Since $H_{2m,1} = 2h_{3m}(w)$, we obtain $\wh f_{2m,1} = \{\wh f_e\}_{3m}(w)$. The same argument
shows also that $\wh f_{2m,2} = \{\wh f_e\}_{3m-1}(w)$, $\wh f_{2m+1,1} = \{\wh f_e\}_{3m+1}(w)$, $\wh f_0 = \{\wh f_e\}_0(w)$ and $\wh f_{1,1} = \{\wh f_e\}_1(w)$. Furthermore, from the definitions of $f_o$ and $\la \cdot, \cdot \ra_{\g,w}$  we have 
$$
 \la f, Y_{2m,3} \ra_{\g,w} =  2\la f_o, p_{3m-2}(\phi w) \ra_{\phi w}.
$$
so that $\wh f_{2m,3} = \{\wh f_o\}_{3m-2}(\phi w)$.  The same argument also shows that $\wh f_{2m+1,2} =
 \{\wh f_o\}_{3m}(\phi w)$, $\wh f_{2m+1,3} = \{\wh f_o\}_{3m-1}(\phi w)$ and $\wh f_{1,2} = \{\wh f_o\}_{0}(\phi w)$. 

Setting $F_k(x) =  \{\wh f_e\}_k(w) p_k(w;x)$ and $G_k(x) =  \{\wh f_o\}_k(\phi w) p_k(\phi w;x)$, we can 
write the partial sum with $n = 2m$ as 
\begin{align*}
 S_{2m}(w; f)(x,y)  = & F_0(x) + F_1(x) + y G_0(x)  + \sum_{k=1}^{m} \left[G_{3k+1}(x) 
   +F_{3k-1}(x) + y G_{3k-2} (x) \right]\\
    & + y \sum_{k=1}^{m-1} \left[F_{3k+1}(x) + y G_{3k}(x) + y G_{3k-1} (x) \right]\\
   = & \sum_{k=0}^{3m+1} F_k (x) + y \sum_{k=0}^{3m-2} G_k(x) =
   s_{3m} (w; f_e, x) + y s_{3m-2} (w; f_o, x).
\end{align*}
A similar proof works for $S_{2m+1}(w;f)$. By \eqref{eq:ipd-parity} and the Parseval identity, we see that 
\begin{align*}
 \|S_{2m}(w;f)\|_w^2 & = \|s_{3m} (w; f_e, x) \|_{\g,w}^2 + \| y s_{3m-2} (\phi w; f_o, x)\|_{\g,w}^2 \\
      & =  2\|s_{3m} (w; f_e) \|_{w}^2 + 2\| s_{3m-2} (\phi w; f_o)\|_{\phi w}^2,
\end{align*}
where the second identity follows since $|s_{3m} (w; f_e, x)|^2$ does not contain $y$, whereas
$|y s_{3m} (\phi w; f_o, x)|^2$ contains a $y^2$, which is equal to $\phi(x)$. The proof for the norm of 
$S_{2m+1}(w;f)$ is similar. 
This completes the proof. 
\end{proof}

\begin{cor}\label{cor:psumconv}
Let $\g$ be a cubic curve and let $w$ be a weight function defined on $\Omega_\g$. Let $f \in 
L^2(\g,w)$. Then $S_n(w; f)$ converges to $f$ in $L^2(\g,w)$. 
\end{cor}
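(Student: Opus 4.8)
The plan is to deduce this convergence corollary from Theorem~\ref{thm:partsums} by reducing the bivariate convergence on $\g$ to two independent univariate convergence statements on $\Omega_\g$. First I would observe that for any $f \in L^2(\g,w)$ the two associated univariate functions $f_e$ and $f_o$ defined in \eqref{eq:fe-fo} satisfy $f_e \in L^2(w)$ and $f_o \in L^2(\phi w)$; this is exactly what the inner product parametrization \eqref{eq:ipd2} gives, since
\begin{align*}
  \|f\|_{\g,w}^2 = \la f,f\ra_{\g,w} = 2\|f_e\|_w^2 + 2\|f_o\|_{\phi w}^2,
\end{align*}
so finiteness of the left-hand side forces both univariate norms to be finite. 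This decomposition is the structural heart of the argument: it says $L^2(\g,w)$ splits isometrically (up to the factor $2$) as a direct sum of the even part living in $L^2(w)$ and the odd part living in $L^2(\phi w)$.

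Next I would invoke the standing density assumption stated in the Fourier orthogonal series subsection, namely that polynomials are dense in $L^2(w)$ (and likewise in $L^2(\phi w)$). Under that hypothesis the classical one-variable theory gives $s_k(w;f_e) \to f_e$ in $L^2(w)$ and $s_k(\phi w;f_o) \to f_o$ in $L^2(\phi w)$ as $k \to \infty$. The key step is then to combine these with the explicit formula for $S_n(w;f)$ from Theorem~\ref{thm:partsums}. Writing the error $f - S_{2m}(w;f)$ on $\g$ and using the same even/odd orthogonality \eqref{eq:ipd-parity} that underlies \eqref{eq:norm-partialsum}, I would show
\begin{align*}
  \|f - S_{2m}(w;f)\|_{\g,w}^2 = 2\|f_e - s_{3m}(w;f_e)\|_w^2 + 2\|f_o - s_{3m-2}(\phi w;f_o)\|_{\phi w}^2,
\end{align*}
with the analogous identity for the odd partial sums $S_{2m+1}$. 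Both terms on the right tend to $0$ by the univariate convergence just cited, and since the indices $3m,3m-2$ (respectively $3m+1,3m$) both tend to infinity with $m$, the full sequence $S_n(w;f)$ converges to $f$ in $L^2(\g,w)$.

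The only genuine subtlety, and the step I would treat most carefully, is justifying the error identity above rather than merely the norm identity \eqref{eq:norm-partialsum} for $S_n$ itself. The cleanest route is to note that $s_{3m}(w;f_e)$ and $y\,s_{3m-2}(\phi w;f_o)$ are themselves of the forms $F(x)$ and $yG(x)$, so the even/odd cross terms vanish by \eqref{eq:ipd-parity}, and then to apply the two scalar identities \eqref{eq:xip1} and \eqref{eq:xyip1} to each remaining piece; this converts the bivariate error norm into the two univariate error norms exactly as displayed. I expect no further obstacles: the whole corollary is essentially a transcription of univariate $L^2$ completeness through the isometric decomposition, with the running density assumption doing the analytic work. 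I would close by remarking that the convergence is in fact at the rate governed by the slower of the two univariate approximations, of $f_e$ by polynomials in $L^2(w)$ and of $f_o$ by polynomials in $L^2(\phi w)$.
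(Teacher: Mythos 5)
Your proof is correct and follows essentially the same route as the paper's: establish $f_e \in L^2(w)$ and $f_o \in L^2(\phi w)$ from the parametrization \eqref{eq:ipd2}, invoke univariate $L^2$ convergence under the standing density assumption, and transfer it to $\g$ via Theorem~\ref{thm:partsums}. The only difference is one of precision rather than substance: where the paper derives the inequality $\|f_e\|_w^2 \le \tfrac12 \|f\|_{\g,w}^2$ and then concludes by citing \eqref{eq:norm-partialsum}, you use the exact identity $\|f\|_{\g,w}^2 = 2\|f_e\|_w^2 + 2\|f_o\|_{\phi w}^2$ and spell out the corresponding identity for the error $f - S_n(w;f)$, which makes the final step of the paper's (rather terse) argument fully explicit.
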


\begin{proof}
For $f\in L^2(\g,w)$, it follows from \eqref{eq:ipd2} and $|a+b|^2 \le 2 (|a|^2+|b|^2)$ that 
$$
  \|f_e\|_w^2 \le \frac12 \int_{\Omega_f} \left[|f\big (x,\sqrt{\phi(x)} \big)|^2 + |f\big (x,- \sqrt{\phi(x)} \big)|^2\right]w(x)\d x
     = \frac12 \|f\|_{\g,w}^2
$$
and similarly $\|f_o\|_{\phi w}^2 \le  \frac12 \|f\|_{\g,w}^2$. Hence, $f_e \in L^2(w)$ and $f_o \in L^2(\phi w)$. 
It follows that $s_n(w; f_e)$ converges to $f_e$ in $L^2(w)$ and $s_n(w; f_o)$ converges to 
$f_o$ in $L^2(\phi w)$. Consequently, the convergence of $S_n(w; f)$ in $L^2(\g,w)$ follows from 
\eqref{eq:norm-partialsum}.
\end{proof}

\subsection{Jacobi operators}\label{sect:jacops}
Let $\wh Y_{n,i} = Y_{n,i}/\sqrt{H_{n,i}}$. Then $\{\wh Y_{n,i}\}$ is an orthonormal basis of $\CV_n$. 
Define 
$$
    \wh\YY_0 =\left[\wh Y_0\right], \quad \wh\YY_1 = \left[ \begin{matrix} \wh Y_{1,1} \\ \wh Y_{1,2}  \end{matrix} \right]
    \quad \hbox{and}\quad \wh\YY_n = \left[ \begin{matrix} \wh Y_{n,1} \\ \wh Y_{n,2} \\ \wh Y_{n,3} \end{matrix} \right], \quad n \ge 2.
$$ 
The general theorem of orthogonal polynomials of several variables shows that
\begin{align} 
  x \wh\YY_n &\, = A_{n,1} \wh\YY_{n+1} + B_{n,1} \wh\YY_n + A_{n-1,1}^t \wh\YY_{n-1},  \label{eq:xrec}\\
 y \wh\YY_n &\, = A_{n,2} \wh\YY_{n+1} + B_{n,2} \wh\YY_n + A_{n-1,2}^t \wh\YY_{n-1}, \label{eq:yrec}
\end{align}
where $A_{0,i}$ are $1\times 2$ matrices and $B_{n,0}$ is a real number; $A_{1,i}$ are $2\times 3$ matrices
and $B_{1,i}$ are $2\times 2$ matrices; $A_{n,i}$ and $B_{n,i}$ are $3\times 3$ matrices for all $n \ge 2$.
The matrices $A_{n,i}$ and $B_{n,i}$ are determined by orthogonality relations: 
$$
   A_{n,1} = \la x  \wh\YY_n \wh\YY_{n+1}^t \ra_{\g,w}, \: A_{n,2} = \la y  \wh\YY_n \wh\YY_{n+1}^t \ra_{\g,w}, \: 
    B_{n,1} = \la x  \wh\YY_n \wh\YY_{n}^t \ra_{\g,w}, \: B_{n,2} = \la y  \wh\YY_n \wh\YY_{n}^t \ra_{\g,w}.
$$
In particular, by \eqref{eq:ipd-parity}, (\ref{eq:xip1}) and (\ref{eq:xyip1}) it is easy to see that these matrices are of the form
\begin{align*}
 A_{1,1}  = \, \left[\begin{matrix} 0 & \ast & 0 \\ 0 & 0 & \ast  \end{matrix}\right],
  \quad A_{2m,1}  = & \, \left[\begin{matrix} \ast & 0 & 0 \\ 0 & 0 & 0 \\ 0 & 0 & \ast  \end{matrix}\right] 
\quad \hbox{and}\quad 
A_{2m+1,1}  =  \, \left[\begin{matrix} 0 & \ast & 0 \\ 0 & 0 & \ast \\ 0 & 0 & 0 \end{matrix}\right],\quad m \ge 1 \\
B_{1,1}  =  \, \left[\begin{matrix} \ast & 0 \\ 0 & \ast  \end{matrix}\right], \quad 
B_{2m,1} = & \, \left[\begin{matrix} \ast & \ast & 0 \\ \ast & \ast & 0 \\ 0 & 0 & \ast  \end{matrix}\right]
\quad \hbox{and}\quad 
B_{2m+1,1}  =  \, \left[\begin{matrix} \ast & 0 & 0 \\ 0 & \ast & \ast \\ 0 & \ast & \ast  \end{matrix}\right], \quad m \ge 1. 
\end{align*}
and 
\begin{align*}
 A_{1,2}  = \, \left[\begin{matrix} 0& 0 & \ast \\ \ast & \ast & 0  \end{matrix}\right],
  \quad A_{2m,2}  = & \, \left[\begin{matrix} 0 & \ast & \ast \\ 0 & 0 & \ast \\ \ast & 0 & 0  \end{matrix}\right] 
\quad \hbox{and}\quad 
A_{2m+1,2}  =  \, \left[\begin{matrix} 0 & 0 & \ast \\ \ast & \ast & 0 \\ 0 & \ast & 0  \end{matrix}\right],\quad m \ge 1 \\
B_{1,2}  =  \, \left[\begin{matrix} 0 & \ast \\ \ast & 0  \end{matrix}\right], \quad 
B_{2m,2} = & \, \left[\begin{matrix} 0& 0 & \ast \\ 0 & 0 &\ast \\ \ast & \ast & 0  \end{matrix}\right]
\quad \hbox{and}\quad 
B_{2m+1,2}  =  \, \left[\begin{matrix} 0 & \ast & \ast \\ \ast & 0 & 0 \\ \ast & 0 & 0 \end{matrix}\right], \quad m \ge 1. 
\end{align*}
These three-term relations in two variables hold when $(x,y)$ are on the cubic curve $\g$ or modulo  the 
polynomial ideal $\la y^2-\phi(x)\ra$. It is worth mentioning that we obtain, for example, 
$$
 x \wh Y_{2m,2} = (B_{2m,1})_{2,1} \wh Y_{2m,1} + (B_{2m,1})_{2,2} \wh Y_{2m,2} + (A_{2m-1,1})_{2,1} \wh Y_{2m-1,1},
$$
where $(A)_{i,j}$ stands for $(i,j)$-element of the matrix $A$, in which the lefthand side is a polynomial
of degree $n+1$, where the righthand side is of degree $n$. This holds without contradiction because 
 $(x,y) \in \g$.

\subsection{Examples of orthogonal polynomials on cubic curves}\label{sect:OPseg}
Recall from Theorem~\ref{thm:OPbasis} that we require two one-variable orthogonal polynomial families 
to construct an orthogonal basis on the cubic curve.
We shall always choose $w$ to be either the Jacobi or Laguerre weight. As we shall argue below the second family
of orthogonal polynomials  $ p_n(\phi w)$ will be non-classical if at least one of the roots 
of $\phi$ is not at an endpoint of $\mathrm{supp}(w)$. Otherwise, if all the roots of $\phi$ are at the endpoint(s)
of $\mathrm{supp}(w)$, the second orthogonal polynomial family will also be Jacobi or Laguerre but with different 
parameters for its weight function. 

The following cases arise:
\begin{itemize}
\item[1.] $\phi$ has one simple real root, or $\phi = (x-a)g_2(x)$, where $g_2(x)$ is a second degree polynomial 
with a pair of complex conjugate roots. 
\item[2.] $\phi$ has one triple real root, or $\phi = k(x-a)^3$, where $k$ is a constant. 
\item[3.] $\phi$ has a simple and a double real root, or $\phi = k(x -a)(x-b)^2$ where $a < b$ or $b < a$. 
The latter case is not of interest since then $b$ represents an isolated point of the curve $y^2 = \phi$.
\item[4.] $\phi$ has three distinct real roots, or $\phi = k(x-a)(x-b)(x-c)$, $a<b<c$.
\end{itemize}
In each of these cases the domain $\Omega_{\gamma}$ on which the orthogonal polynomials are defined is either a semi-infinite 
interval $[A, \infty)$ or the union of a compact interval $[a, b]$ and a semi-infinite interval.  On semi-infinite 
intervals, we additionally consider the cases for which the support of the weight is (i) also semi-infinite, 
or (ii) compact. Through a linear change of variables we may, without loss of generality, let 
$\mathrm{supp}(w) \subset \Omega_{\gamma}$ be the canonical compact interval $[-1, 1]$ or the semi-infinite 
interval $[0, \infty)$. If $\Omega_{\gamma}$ is the union of a compact and semi-infinite interval, we may 
consider $[-1, 1]$ and $[0, \infty)$ separately. If $\mathrm{supp}(w)$ includes the root(s) of $\phi$, then these will be mapped to $-1$ and/or $1$ if the domain is compact and to $0$ if the domain is semi-infinite. This implies that on $[-1, 1]$, if we let the weight $w$ be the 
Jacobi weight $w_{\alpha,\beta}$, then there exists a polynomial $g_k(x)$ of degree $k$, 
$0 \leq k \leq 3$, that is strictly positive on $\mathrm{supp}(w)$ and whose roots are outside $\mathrm{supp}(w)$,  such that 
\begin{align}
\phi =  w_{i,j}g_k, \qquad 0 \leq i, j \leq 3, \qquad 0\leq k \leq 3, \qquad i + j + k =3.   \label{eq:jacnonclassw}
\end{align}
Hence, $\phi$ has $i+j$ roots at the endpoints of $[-1, 1]$ and $k$ roots outside $[-1, 1]$. The weight 
function of the second orthogonal polynomial family is $\phi w = w_{\alpha+i,\beta+j}g_k$, which is a 
non-classical weight if $k>0$ and classical if $k=0$. On $[0, \infty)$, if we let $w$ be the Laguerre weight 
$w_{\alpha}$, then
\begin{align}
\phi = x^j g_k, \qquad
  0 \leq j \leq 3, \qquad 0 \leq k \leq 3, \qquad j + k =3.  \label{eq:legnonclassw}
\end{align}
Here $\phi$ has $j$ roots at the endpoint of $[0, \infty)$  and $k$ roots outside $[0, \infty)$. The weight function 
of the second family is $\phi w = w_{\alpha+j}g_k$, which is non-classical if $k>0$ and classical if $k=0$.


We consider three cubic curves as examples. For the first two, orthogonal polynomials can be given explicitly in terms 
of classical orthogonal polynomials. The third example discusses orthogonality on elliptic curves. 

\subsubsection{Orthogonal polynomials on the curve $y^2 = x^3$} 
In this example, the curve and the weight functions are 
$$
     y^2 = \phi = x^3 \quad \hbox{and} \quad w_\a(x) = x^\a e^{-x}, \qquad \a > -1.
$$
Since all the roots of $\phi$ are at $0$, the endpoint of $\mathrm{supp}(w)$, the orthogonal polynomial basis on the curve can be constructed entirely out of Laguerre polynomials. The polynomial $p_n(w;x) = L_n^{(\a)}(x)$ is the classical Laguerre polynomial of degree $n$,
$$
  L_n^{(\a)}(x) =\frac{(\a+1)_n}{n!} \sum_{k=0}^n \frac{(-n)_k}{(\a+1)_k } \frac{x^k}{k!}. 
$$
Moreover, $p_n(\phi w; x) = L_n^{(\a+3)}(x)$ is also an Laguerre polynomial with parameter $\a+3$.
In this setting the inner product on the curve becomes
\begin{align*}
   \la f,g\ra_{\g,w} \, &= \int_\g f(x,y) g(x,y) w_\a(x) d\s(x,y) \\
& = \int_0^\infty \left [ f(x,x^{3/2})g(x,x^{3/2}) + f(x, - x^{3/2})g(x, - x^{3/2})\right] w_\a(x) \d x.
\end{align*}
The orthogonal basis $\CB_n$ of the space $\CV_n$ in Theorem \ref{thm:OPbasis} becomes 
\begin{align*}
  \CB_{2m}  = & \left\{L_{3m}^{(\a)}(x), L_{3m-1}^{(\a)}(x), y L_{3m-2}^{(\a+3)}(x)\right\}, \\
  \CB_{2m+1} = & \left \{L_{3m+1}^{(\a)}(x), y L_{3m}^{(\a+3)}(x), y L_{3m-1}^{(\a+3)}(x)\right\}.  
\end{align*}
The norm of the Laguerre polynomial $L_n^{(\a)}$ is given by
$$
  h_n^{(\a)} = \frac{1}{\Gamma(\a+1)} \int_0^\infty [L_n^{(\a)}(x)]^2 e^{-x} \d x = \binom{n+\a}{n}, 
$$
from which the norm of the basis in $\CV_n$ can be derived as in Theorem \ref{thm:OPbasis}. 

\subsubsection{Jacobi polynomials on tear drop curves}
In this example, the curve is the tear drop curve 
\begin{equation}
   y^2= \phi= \tfrac14  (1-x)^2 (1+x), \qquad -1 \le x \le 1. 
\end{equation}
and the weight function is the Jacobi weight, for $\a, \b > -1$,
\begin{equation}
     w_{\a,\b}(x) = (1-x)^\a (1+x)^\b, \qquad -1 \le x \le 1. 
\end{equation}
Since all the roots of $\phi$ are at the endpoints of $\mathrm{supp}(w_{\a,\b})$, the orthogonal basis on the tear drop
curve can be constructed entirely out of Jacobi polynomials. The polynomial $p_n(w;x)$ is the usual Jacobi polynomial
$P_n^{(\a,\b)}(x)$ 
$$
  P_n^{(\a,\b)}(x) = \binom{n+\a}{n} {}_2F_1 \left(\begin{matrix} -n, n+\a+\b \\ \a+1 \end{matrix}; \frac{1-x}{2} \right),
$$
and $p_n(\phi w)$ is also a Jacobi polynomial, $p_n(\phi w) = P_n^{(\a+2,\b+1)}$. 
In this setting the inner product on the curve becomes
\begin{align*}
   \la f,g\ra_{\g,w} &= \int_\g f(x,y) g(x,y) d\ell(x,y)  \\
&= \int_{-1}^1 \left [ f\left(x,\sqrt{\phi(x)}\right)g\left(x,\sqrt{\phi(x)}\right) 
    +  f\left(x,-\sqrt{\phi(x)}\right)g\left(x,-\sqrt{\phi(x)}\right)\right] w_{\a,\b}\; \d x.
\end{align*}
The orthogonal basis $\CB_n$ of the space $\CV_n$ in Theorem \ref{thm:OPbasis} becomes 
\begin{align*}
  \CB_{2m} & = \left\{P_{3m}^{(\a,\b)}(x), P_{3m-1}^{(\a,\b)}(x), y P_{3m-2}^{(\a+2,\b+1)}(x)\right\}, \\
  \CB_{2m+1} & = \left \{P_{3m+1}^{(\a,\b)}(x), y P_{3m}^{(\a+2,\b+1)}(x), y P_{3m-1}^{(\a+2,\b+1)}(x)\right\}.  
\end{align*}

\subsubsection{Orthogonal polynomials on elliptic curves} We consider two elliptic curves. The first one is given by
$$
     y^2 = x^3 - 2 x + 4 = (x+2) \left((x -1)^2 +1\right) =: \phi(x),
$$
which has one component. We set $u = x+2$ so that $\phi = u\left((u -3)^2 +1\right)$ has a root at $u=0$ and we can choose the classical Laguerre weight
$$
   w_\a(u) = u^\a e^{-u}, \quad \a > -1, 
$$
defined for $u \ge 0$.
In this setting, the $\lbrace p_n(w) \rbrace$ are given by the Laguerre polynomials $L_n^{(\a)}$. Since $\phi(u)$ has two roots outside $\mathrm{supp}(w_{\a})$, the orthogonal polynomial family $\lbrace p_n(\phi w) \rbrace$ is non-classical and orthogonal with respect to
$$
\phi(u) w_\a(u) = ((u -3)^2+1) u^{\a+1} e^{-u}, \qquad u \ge 0.
$$
The inner product in this setting becomes
\begin{align*}
  \la f,g \ra_{\g,w_\a} = \int_{0}^\infty \left[f \left(u, \sqrt{\phi(u)} \right)g \left(u, \sqrt{\phi(u)} \right) +  f \left(u, - \sqrt{\phi(u)} \right)g \left(u, - \sqrt{\phi(u)} \right) \right] w_\a \d u.
\end{align*}   
If we choose a weight function $w$ that is supported on $x \in [-2, 2]$, say, then the inner product is defined on a finite
segment of $\g$. We set $x = 2u$, where $u \in [-1, 1]$ and choose the Jacobi weight $w = w_{\a,\b}(u)$. In this case, the $\lbrace p_n(w) \rbrace$ are given by the Jacobi polynomials $P_n^{(\a,\b)}$ and the $\lbrace p_n(\phi w) \rbrace$ are non-classical orthogonal polynomials (since $\phi$ has two roots outside $\mathrm{supp}(w)$) with respect to the weight
\begin{align*}
\phi(u) w_{\a,\b}(u) = 2\left((2u-1)^2 + 1  \right)w_{\a,\b+1}(u).
\end{align*}

Our second elliptic curve is given by 
$$
   y^2 = x^3 - 4 x = x (x^2-4),
$$
which has two components. The first one is a closed curved with $-2\le x \le 0$ and the second one is an open
curve defined for $x \ge 2$. On the first component we set $u = x + 1 \in [-1, 1]$ and choose $w = w_{\a,\b}(u)$ in which case $p_n(w;u) = P_n^{(\a,\b)}(u)$ and the polynomials $\lbrace p_n(\phi w;u) \rbrace$ are orthogonal with respect to $\phi w = w_{\a +1,\b +1}(u-3)$. On the second component we let $u = x -2$ and choose $w = w_{\a}(u)$ so that $p_n(w;u) = L_n^{(\a)}(u)$ and the polynomials $\lbrace p_n(\phi w;u) \rbrace$ are orthogonal with respect to $\phi w = w_{\a +1}(u+4)(u+2)$. 

\medskip
\noindent{\bf Remark}:
For a particularly convenient and flexible method for numerically computing these non-classical orthogonal polynomials $p_n(\phi w)$, based on the Lanczos algorithm, see~\cite{O4,Ol}. This is described in more detail in Section~\ref{sect:approximation}.

\section{Quadrature rules and polynomial interpolation}\label{sect:quadpoly}
\setcounter{equation}{0}

We consider quadrature rules and polynomials interpolation on the cubic curve. 

\subsection{Quadrature rules}
First we recall Gauss quadrature for a weight function 
$w$ defined on the real line. Let $\Pi_N^{(x)}$ denote the space of univariate polynomials of degree at most $N$
in $x$ variable. Let $x_{k,N}$, $1\le k \le N$, be the zeros of the orthogonal polynomial $p_N(w)$ of degree $N$. 
These zeros are the nodes of the $N$-point Gaussian quadrature rule, which is exact for polynomials of degree $2N-1$, 
$$
  \int_\RR f(x) w(x) \d x = \sum_{k=1}^N \lambda_{k,N} f(x_{k,N}), \qquad \forall f\in \Pi_{2N-1}^{(x)},
$$ 
where $\l_{k,N}$ are the Gaussian quadrature weights. Let $\g$ be a cubic curve. We
denote by $\Pi_n(\g)$ the space of polynomials of degree at most $n$ restricted to the curve $\g$. By 
Theorem \ref{thm:OPbasis}, 
\begin{equation}\label{eq:Pi-g}
\Pi_{2m+1}(\g) = \Pi_{3m+1}^{(x)} \cup y\Pi_{3m}^{(x)} \quad\hbox{and}\quad
   \Pi_{2m}(\g) = \Pi_{3m}^{(x)} \cup y \Pi_{3m-2}^{(x)}.
\end{equation}
From Proposition \ref{prop:dimVn} it follows 
$$
   \dim \Pi_0(\g) =1 \quad\hbox{and}\quad  \dim \Pi_n(\g) = 3 n, \quad n \ge 1.
$$


\begin{thm}\label{thm:Gaussian}
Let $\g$ be a cubic curve and $w(x)$ be a weight function on $\Omega_\g$. For $n = 2m$, let $N = N_n = 3m$ and for $n = 2m+1$, let $N = N_n = 3m+1$.  Let 
\begin{equation}\label{eq:quad1}
   I_n (f):=  \sum_{k=1}^{N} \l_{k,N} \left[f(x_{k,N},y_{k,N})+f(x_{k,N},-y_{k,N})\right], \qquad y_{k,N} = \sqrt{\phi(x_{k,N})}, 
\end{equation}
where $x_{k,N}$ are zeros of $p_{N}(w)$ and $\l_{k,N}$ are the corresponding weights of the $N$-point Gauss 
quadrature rule. Then 
\begin{equation}\label{eq:quad2}
  \int_\g f(x,y) w(x) \d \s(x,y) = I_n(f), \qquad \forall f\in \Pi_{2n-1}(\g). 
\end{equation}
\end{thm}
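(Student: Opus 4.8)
The plan is to reduce the bivariate quadrature on the curve to two univariate Gauss quadrature rules via the parity decomposition already used throughout the paper. The key observation is that any $f \in \Pi_{2n-1}(\g)$, when restricted to the curve, splits into an even part and a $y$-odd part. Concretely, using \eqref{eq:ipd2} and the decomposition \eqref{eq:Pi-g}, I would write $f(x,y) = f_e(x) + y f_o(x)$ on $\g$ (this is the same even/odd-in-$y$ splitting as in \eqref{eq:fe-fo}), so that the quadrature functional $I_n(f)$ automatically annihilates the odd part: since $I_n$ averages $f(x_{k,N}, y_{k,N})$ and $f(x_{k,N}, -y_{k,N})$, the $y$-odd contribution cancels and
$$
   I_n(f) = 2 \sum_{k=1}^N \l_{k,N} f_e(x_{k,N}).
$$
On the other hand, by \eqref{eq:ipd2} the exact integral also kills the odd part, giving $\int_\g f\, w\, \d\s = 2\int_{\Omega_\g} f_e(x)\, w(x)\, \d x$. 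Thus the bivariate identity reduces to the univariate claim that the $N$-point Gauss rule integrates $f_e$ exactly.

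\textbf{The main step} is then a degree count for $f_e$. First I would check, using \eqref{eq:Pi-g}, the degrees of $f_e$ and $f_o$ as univariate polynomials when $f \in \Pi_{2n-1}(\g)$. For $n = 2m$ we have $2n-1 = 4m-1 = 2(2m-1)+1$, so $\Pi_{2n-1}(\g) = \Pi_{3(2m-1)+1}^{(x)} \cup y\,\Pi_{3(2m-1)}^{(x)} = \Pi_{6m-2}^{(x)} \cup y\,\Pi_{6m-3}^{(x)}$; hence $f_e \in \Pi_{6m-2}^{(x)}$. With $N = 3m$, the Gauss rule is exact on $\Pi_{2N-1}^{(x)} = \Pi_{6m-1}^{(x)} \supset \Pi_{6m-2}^{(x)}$, so $f_e$ is integrated exactly. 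For $n = 2m+1$ we have $2n-1 = 4m+1 = 2(2m)+1$, giving $\Pi_{2n-1}(\g) = \Pi_{6m+1}^{(x)} \cup y\,\Pi_{6m}^{(x)}$, whence $f_e \in \Pi_{6m+1}^{(x)}$; with $N = 3m+1$ the rule is exact on $\Pi_{6m+1}^{(x)}$, which again contains $f_e$. In both cases the even part lands exactly within the Gaussian exactness range.

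\textbf{The one subtlety to watch} is that the product $f(x,y)g(x,y)$ structure of the inner product is not what is being integrated here — only a single $f$ appears — so the relevant polynomial space is $\Pi_{2n-1}(\g)$ itself rather than a product space, and the degree bookkeeping via \eqref{eq:Pi-g} must be applied to $2n-1$ directly. I would verify that $f_o$ need only be integrable (its contribution vanishes identically under $I_n$ and under the exact integral by symmetry), so no exactness requirement is imposed on the odd part; this is precisely why the comparatively low node count $N \approx 3n/2$ suffices to achieve exactness up to degree $2n-1$ on the curve. Assembling the even/odd cancellation with the two degree counts completes the proof, and I expect the bulk of the writing to be the careful substitution of $n=2m$ and $n=2m+1$ into \eqref{eq:Pi-g} to confirm $\deg f_e \le 2N-1$ in each parity.
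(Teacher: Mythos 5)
Your proof is correct and follows essentially the same route as the paper's: both arguments kill the $y$-odd part by the symmetry of $I_n$ and of the parametrized integral \eqref{eq:ipd2}, and reduce the even part to univariate Gaussian exactness with the same degree counts ($6m-2 < 2N_{2m}-1$ and $6m+1 = 2N_{2m+1}-1$). The only cosmetic difference is that the paper verifies \eqref{eq:quad2} on the orthogonal basis elements $Y_{k,i}$ of $\bigoplus_k \CV_k(\g,w)$, while you verify it for a general $f$ via the decomposition \eqref{eq:Pi-g} — these are interchangeable since \eqref{eq:Pi-g} is itself a restatement of that basis.
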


\begin{proof}
Since $\Pi_{2n-1}(\g) = \bigoplus_{k=0}^{2n-1} \CV_k(\g,w)$, we verify the quadrature rule for the basis of 
$\CV_k(\g,w)$ in Theorem \ref{thm:OPbasis} for $0 \le k \le 2n-1$. 
Since $Y_{2j,3}$, $Y_{2j+1,2}$ and 
$Y_{2k+1,3}$ contain a single factor $y$, both sides of \eqref{eq:quad2} are zero by \eqref{eq:ipd2}
and \eqref{eq:quad1}. Thus, we need to verify \eqref{eq:quad2} for 
\begin{align*}
  & \{Y_{2j,1}, Y_{2j,2}: 1 \le j \le n-1\} \cup \{Y_{2j+1,1}: 1\le j\le n-1\} \\
 & \quad =  \{p_{3j}(w), p_{3j-1}(w): 1 \le j \le n-1\} \cup \{p_{3j+1}(w): 1\le j\le n-1\} = \Pi^*.
\end{align*}
For $n = 2m$, the highest degree of $p_\ell(w)$ in the set is $3n-2 = 6m-2 < 2N_{2m}-1$, whereas 
for $n= 2m+1$, it is $3n-2 = 6m+1 = 2 N_{2m+1}-1$. For $f(x,y) = p_\ell(w;x)$, \eqref{eq:quad2} 
becomes, by \eqref{eq:ipd2}, 
$$
  \int_{\Omega_\g} p_\ell(w;x) w(x) \d x =  \sum_{k=1}^{N} \l_{k,N} p_\ell(w; x_{k,N}), 
$$
which holds, by the Gaussian quadrature, for $0 \le \ell \le 2N_n-1$. Consequently, it holds for all
polynomials in $\Pi^*$. This verifies \eqref{eq:quad2} for all polynomials in $\Pi_{2n-1}(\g)$ and 
completes the proof. 
\end{proof}
 
\begin{rem} \label{rem:gaussCF}
For $n = 2m+1$, the quadrature \ref{eq:quad2} uses $2 N_{2m+1} = 6m + 2 = 3 n-1$, whereas for  $n = 2m$, 
the quadrature \ref{eq:quad2} uses $2 N_{2m} = 6m = 3 n$ points. It is exact for the space $\Pi_{2n-1}(\g)$, 
which has the dimension $3 (2n-1) = 6 n - 3$. 
\end{rem} 
 
The quadrature \eqref{eq:quad1} on the cubic curve is an analogue of the Gaussian quadrature rule on the
real line. We now consider polynomial interpolation based on the nodes of this quadrature rule. 

\subsection{Lagrange interpolation}

First we recall the univariate
Lagrange interpolation polynomial on the zeros $x_{k,N}$, $1 \le k \le N$, of $p_N(w)$, denoted 
by $L_N (w; f)$, which is the unique polynomial of degree at most $N-1$ that satisfies
$$
   L_N (w; f, x_{k,N}) = f(x_{k,N}), \qquad 1 \le k \le N, 
$$
for any continuous function $f$. It is well-known that $L_N(w; f)$ is given by
\begin{equation}\label{eq:LagrangeInte1}
  L_N (w;f,x) = \sum_{k=1}^N f(x_{k,N})\ell_k(x), \qquad \ell_k(x) = \frac{p_N(w;x)}{(x-x_{k,N}) p_N'(w_{k,N})}.
\end{equation}
By the Christoffel--Darboux formula, we can also write $\ell_k$ as
\begin{equation}\label{eq:LagrangeInte2}
  \ell_k(x) = \frac{K_N(w; x,x_{k,N})}{K_N(w; x_{k,N},x_{k,N})}, 
  \qquad K_N(x,y) = \sum_{k=0}^{N-1} \frac{p_k(w;x)p_k(w;y)}{h_k(w)}. 
\end{equation}


\begin{thm}\label{thm:laginterp}
Let $\g$ be a cubic curve and $w(x)$ be a weight function on $\Omega_\g$. For $f \in C(\Omega_\g)$, let 
$f_o$ and $f_e$ be defined as in \eqref{eq:fe-fo}. For $n = 0,1,2\ldots$, let $N = N_n$ be defined by
$N_{2m} = 3m$ and $N_{2m+1} = 3m+1$. Let 
\begin{equation}\label{eq:interp}
   \CL_n (w; f, x,y):=  L_{N} (w; f_e, x) + y L_{N} (w; f_o, x),
\end{equation}
Then $\CL_n(w;f)$ is a polynomial that satisfies 
\begin{align*}
\begin{split}
  \CL_n(w; f, x_{k,N}, y_{k,N})  &\, = f(x_{k,N},y_{k,N}), \\ 
  \CL_n(w; f, x_{k,N}, - y_{k,N}) &\, = f(x_{k,N},- y_{k,N}),
\end{split}
\qquad 1 \le k\le N.
\end{align*}  
Furthermore, it is the unique interpolation polynomial in $\Pi_n(\g)$ if $n = 2m+1$ and in
$\Pi_{n}(\g)\cup\{ Y_{2m+1,3}\}$ if $n = 2m$. 
\end{thm}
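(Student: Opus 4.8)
The plan is to build everything on the even/odd splitting in the $y$-variable that already drives Theorem~\ref{thm:OPbasis} and Theorem~\ref{thm:partsums}. The key elementary fact is that on $\g$ every function reconstructs from its parts as $f(x,y) = f_e(x) + y f_o(x)$, with $f_e,f_o$ as in \eqref{eq:fe-fo}: substituting $y = \pm\sqrt{\phi(x)}$ returns $f(x,\pm\sqrt{\phi(x)})$. Since $\CL_n(w;f,x,y) = L_N(w;f_e,x) + y L_N(w;f_o,x)$ has exactly this shape, the interpolation identities drop out immediately. At a node $(x_{k,N},\pm y_{k,N})$ the univariate Lagrange operator gives $L_N(w;f_e,x_{k,N}) = f_e(x_{k,N})$ and $L_N(w;f_o,x_{k,N}) = f_o(x_{k,N})$, so $\CL_n(w;f,x_{k,N},\pm y_{k,N}) = f_e(x_{k,N}) \pm y_{k,N} f_o(x_{k,N}) = f(x_{k,N},\pm y_{k,N})$. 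Thus the interpolation property reduces entirely to the defining property of $L_N$.

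Next I would check membership in the stated polynomial space using the degree bookkeeping in \eqref{eq:Pi-g}. Both $L_N(w;f_e)$ and $L_N(w;f_o)$ have degree at most $N-1$ in $x$. For $n = 2m+1$ one has $N-1 = 3m$, hence $L_N(w;f_e,x)\in\Pi_{3m}^{(x)}\subset\Pi_{3m+1}^{(x)}$ and $yL_N(w;f_o,x)\in y\Pi_{3m}^{(x)}$, so \eqref{eq:Pi-g} yields $\CL_{2m+1}(w;f)\in\Pi_{2m+1}(\g)$. For $n = 2m$ one has $N-1 = 3m-1$, so the even part lies in $\Pi_{3m-1}^{(x)}\subset\Pi_{3m}^{(x)}$ while the odd part lies in $y\Pi_{3m-1}^{(x)} = y\Pi_{3m-2}^{(x)} \oplus \mathrm{span}\{yx^{3m-1}\}$. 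Since $yx^{3m-1}$ is, modulo $\la y^2-\phi(x)\ra$, a polynomial of curve-degree $2m+1$ whose leading behaviour matches $Y_{2m+1,3} = yp_{3m-1}(\phi w;x)$, this extra direction is absorbed into $\mathrm{span}\{Y_{2m+1,3}\}$, giving $\CL_{2m}(w;f)\in\Pi_{2m}(\g)\cup\{Y_{2m+1,3}\}$.

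For uniqueness I would again peel off the two parities. Writing an arbitrary candidate in the ambient space as $P(x,y) = A(x) + yB(x)$ and imposing the homogeneous conditions $P(x_{k,N},\pm y_{k,N}) = 0$, then adding and subtracting the two equations (using $y_{k,N}\ne 0$, which holds because the Gauss nodes lie in the interior of $\mathrm{supp}(w)\subset\Omega_\g$ where $\phi>0$), forces $A(x_{k,N}) = 0$ and $B(x_{k,N}) = 0$ for all $k$. The argument then closes by univariate Lagrange uniqueness applied separately to $A$ and $B$: if each has degree at most $N-1$, vanishing at the $N$ distinct nodes $x_{k,N}$ forces $A = B = 0$, so $P = 0$. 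The main obstacle is precisely this degree accounting, i.e.\ confirming that the relevant interpolation subspace has dimension exactly $2N$ and is poised. The delicate point is that the even part must be held to degree $N-1$ rather than $N$: were degree $N$ admitted, a scalar multiple of $p_N(w;x)$ (equal to $Y_{2m,1}$ when $n=2m$ and to $Y_{2m+1,1}$ when $n=2m+1$) would vanish at every node and obstruct uniqueness. In effect the correct $2N$-dimensional interpolation space trades this unused top even-degree direction for the extra odd-degree direction $Y_{2m+1,3}$ in the case $n=2m$; once that accounting is pinned down, matching $2N$ conditions against a space of dimension $2N$ makes the problem poised and uniqueness follows.
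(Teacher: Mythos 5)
Your proposal is correct and takes essentially the same route as the paper's proof: the interpolation conditions follow from $f = f_e + y f_o$ on $\g$ together with univariate Lagrange interpolation, membership follows from the same degree bookkeeping via \eqref{eq:Pi-g}, and uniqueness is obtained by the same parity splitting at the nodes (using $y_{k,N}>0$) followed by univariate Lagrange uniqueness. Your explicit flagging of the fact that $p_N(w;x)=Y_{n,1}$ vanishes at all $2N$ nodes, so that the even part must be held to degree $N-1$ for the problem to be poised, is exactly the paper's counting of the $2N$ ``independent functions over the set of nodes,'' so the two treatments of uniqueness coincide in substance.
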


\begin{proof}
If $n = 2m+1$, then $N =3m+1$ and  we interpolate at $2N = 6m+2$ points.  In this case, both $L_N(w;f_e)$ and $L_N(w;f_o)$ 
are elements of $\Pi_{3m}^{(x)}$. Since $\Pi_{2m+1}(\g) = \Pi_{3m+1}^{(x)} \cup y\Pi_{3m}^{(x)}$, it follows that 
$\CL_n(w;f) \in \Pi_{2m+1}(\g)$. Since $Y_{2m+1,1}(x,y) = p_{3m+1}(w;x)$ vanishes on all interpolation
points, there are $\dim \Pi_{2m+1}(\g) - 1 = 6m+2$ independent functions over the set of nodes in the space 
$\Pi_{2m+1}(\g)$. Similarly, if $n=2m$, then $N = 3m$ and we interpolate at $2N = 6m$ points. In this case, 
both $L_N(w;f_e)$ and $L_N(w;f_o)$ are elements of $\Pi_{3m-1}^{(x)}$. Since $\Pi_{2m}(\g) = \Pi_{3m}^{(x)} 
\cup y \Pi_{3m-2}^{(x)}$, it follows that $\CL_n(w;f) \in \Pi_{2m}(\g) \cup \{Y_{2m+1,3}\}$ since 
$Y_{2m+1,3} = y p_{3m-1}( w;x)$. 
Since $Y_{2m,1}(x,y) =p_{3m}(w;x)$ vanishes at all nodes, we see 
that there are $\dim \Pi_{2m}(\g) - 1 + 1= 6m$ independent functions over the set of nodes in the space. Now, 
for $1\le k \le N$, we obtain from the Lagrange interpolation of $L_N(w; f)$ that 
\begin{align*}
 \CL_n(w; f, x_{k,N}, y_{k,N}) &\, =  L_N (w; f_e, x_{k,N}) + y_{k,N} L_{N} (w; f_o, x_{k,N}) \\
      & \, = f_e(x_{k,N}) + y_{k,N} f_o(x_{k,N}) = f(x_{k,N}, y_{k,N}); 
\end{align*}
similarly, we also obtain that 
\begin{align*}
 \CL_n(w; f, x_{k,N}, - y_{k,N}) = f_e(x_{k,N}) - y_{k,N} f_o(x_{k,N}) = f(x_{k,N}, - y_{k,N}), 
\end{align*}
so that $\CL_n(w;f)$ satisfies the desired interpolation conditions. 

Finally, since zeros of $p_N(w)$ are all in the interior of $\Omega_\g$, it follows that 
$y_{k,N} = \sqrt{\phi(x_{k,N})} > 0$ for all $k$. Consequently, if $f(x_{k,N}, y_{k,N}) =0$ for all $1 \le k \le N$, 
then $ L_N (w; f_e, x_{k,N}) =0$ and $ L_N (\phi w; f_o, x_{k,N}) =0$ for all $k$, so that, by the uniqueness
of the Lagrange interpolation, $f_e = 0$ and $f_o = 0$. Consequently, $f =0$, which proves that the interpolation
polynomials are unique in their respective spaces.  
\end{proof}

\subsection{Interpolation via quadrature}\label{sect:interpviaquadsect}
For computational purposes, it is convenient to express the interpolant defined in Theorem~\ref{thm:laginterp} as a truncated expansion in the orthogonal polynomial basis on $\g$. We recall a classical result for univariate interpolants, according to which (\ref{eq:LagrangeInte1}) can be expressed as an expansion in the orthogonal polynomials $p_n(w)$,
\begin{align}
L_N(w;f,x) = \sum_{k=0}^{N-1} a_{k,N}p_k(w;x), \label{eq:uniexp}
\end{align}
where
\begin{align*}
a_{k,N} = \frac{\la f, p_k(w) \ra_N}{\la p_k(w), p_k(w) \ra_N}  = \frac{\la f, p_k(w) \ra_N}{h_k(w)}, \qquad 0 \leq k \leq N-1,
\end{align*}
and $\la \cdot, \cdot \ra_{N}$ denotes the discretised univariate inner product $\la \cdot, \cdot \ra_{w}$ based on the $N$-point Gaussian quadrature rule:
\begin{align*}
  \la f(x),g(x)\ra_N :=  \sum_{k=1}^N \l_{k,N} f(x_{k,N})g (x_{k,N}),  
\end{align*}
where, as before, $x_{k,N}$ and  $\l_{k,N}$ are the Gaussian quadrature nodes (the roots of $p_{N}(w)$) and weights, respectively. 

According to the following bivariate analogue of the univariate result just mentioned, we require a system of $M$ orthogonal functions with respect to an $M$-point discrete inner product to construct an interpolant expanded in an orthogonal basis.
\begin{prop}\cite{OX2} \label{prop:intviaquad}
Suppose we have a discrete inner product for a basis $\lbrace \phi_j \rbrace_{j=0}^{M-1}$ of the form
\begin{align*}
\la f, g \ra_{M} = \sum_{j = 1}^M w_j f(x_j,y_j)g(x_j,y_j)
\end{align*}
satisfying $\la \phi_m, \phi_n \ra_{M} = 0$ for $m \neq n$ and $\la \phi_n, \phi_n \ra_{M} \neq 0$. Then the function
\begin{align*}
\CL_M f(x,y) = \sum_{n = 0}^{M-1} f_{n}^{M}\phi_n(x,y)
\end{align*}
interpolates $f(x,y)$ at $(x_j,y_j)$, where $f_n^M := \frac{\la \phi_n, f \ra_M}{\la \phi_n, \phi_n \ra_{M}}$.
\end{prop}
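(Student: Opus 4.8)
The plan is to strip the statement down to a fact about orthogonal bases of the finite-dimensional space $\RR^M$. The discrete pairing $\la \cdot,\cdot\ra_M$ sees a function only through its values at the $M$ nodes $(x_1,y_1),\dots,(x_M,y_M)$, so I would introduce the evaluation map $E$ sending a function $g$ to the vector $E(g) = (g(x_1,y_1),\dots,g(x_M,y_M)) \in \RR^M$, together with the symmetric bilinear form $B(\ub,\vb) = \sum_{j=1}^M w_j u_j v_j$ on $\RR^M$. With this notation one has $\la f,g\ra_M = B(E(f),E(g))$ for all $f,g$, so the hypotheses say exactly that the vectors $\vb_n := E(\phi_n)$, $0 \le n \le M-1$, satisfy $B(\vb_m,\vb_n)=0$ for $m\ne n$ and $B(\vb_n,\vb_n)\ne 0$.

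The first step is to show that $\{\vb_n\}_{n=0}^{M-1}$ is a basis of $\RR^M$. Linear independence is immediate from orthogonality: if $\sum_n c_n\vb_n = 0$, then applying the linear functional $B(\vb_m,\cdot)$ collapses the sum to $c_m B(\vb_m,\vb_m)=0$, forcing $c_m=0$ because $B(\vb_m,\vb_m)\ne 0$. Since there are $M$ such vectors in the $M$-dimensional space $\RR^M$, they form a basis. I note that positivity of the weights $w_j$ is never needed here; only symmetry of $B$ and the nonvanishing of the diagonal entries $B(\vb_n,\vb_n)$ are used.

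Next I would invoke the orthogonal expansion identity. For any $\ub\in\RR^M$, write $\ub=\sum_n c_n\vb_n$ in this basis; applying $B(\vb_m,\cdot)$ once more isolates $c_m = B(\vb_m,\ub)/B(\vb_m,\vb_m)$. Taking $\ub=E(f)$ and recognising $B(\vb_n,E(f)) = \la \phi_n,f\ra_M$ and $B(\vb_n,\vb_n)=\la \phi_n,\phi_n\ra_M$, the coefficients $c_n$ coincide with the $f_n^M$ in the statement. Hence $E(\CL_M f) = \sum_n f_n^M \vb_n = E(f)$, which is precisely the assertion $\CL_M f(x_j,y_j)=f(x_j,y_j)$ for every $j$; this is the bivariate counterpart of the univariate expansion \eqref{eq:uniexp}.

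I do not expect a genuine analytic obstacle: the entire content is the translation from the function-space pairing, which is only a degenerate semi-inner product since any function vanishing at all the nodes has zero norm, to the honest form $B$ on $\RR^M$, after which the claim is the standard fact that expansion in an orthogonal basis reproduces the original vector. The one point that warrants care is deducing linear independence of the $\vb_n$ from orthogonality alone, so that the $M$ functions $\phi_n$ truly resolve all $M$ degrees of freedom carried by the node values.
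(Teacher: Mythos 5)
Your proof is correct and complete. Note that this paper never proves Proposition~\ref{prop:intviaquad} at all: it is imported from \cite{OX2} and used as a black box, so there is no in-paper argument to compare against. Your route is the natural one: push everything through the evaluation map $E(g) = \bigl(g(x_1,y_1),\dots,g(x_M,y_M)\bigr)$ into $\RR^M$, observe that pairwise orthogonality together with nonzero self-inner-products forces the $M$ value-vectors $E(\phi_n)$ to be linearly independent and hence a basis of $\RR^M$, and then identify the expansion coefficients of $E(f)$ in this basis with the $f_n^M$, giving $E(\CL_M f)=E(f)$. A common alternative formulation of the same idea is to consider the residual $g = f - \CL_M f$, verify $\la g,\phi_n\ra_M = 0$ for all $n$, and conclude that $g$ vanishes at the nodes because the vectors $E(\phi_n)$ span $\RR^M$; that variant requires every $w_j\neq 0$ in order to pass from $w_j\,g(x_j,y_j)=0$ to $g(x_j,y_j)=0$, a hypothesis your version never invokes. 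Your side remark is also accurate and worth keeping: positivity (indeed nonvanishing) of the $w_j$ is not needed as an assumption, and in fact the hypotheses already force all $w_j\neq 0$, since a bilinear form on $\RR^M$ that ignores some coordinate cannot admit $M$ pairwise-orthogonal vectors each of nonzero norm. The only point a referee might ask you to spell out is the linearity of $E$ used in the final step, $E(\CL_M f)=\sum_n f_n^M E(\phi_n)$, which is immediate.
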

We first consider the inner product  $\la \cdot, \cdot \ra_{N,\g}$ coming from discretization of 
$\la \cdot, \cdot \ra_{\g,w}$ via Gauss quadrature,
\begin{align}
 \la f,g \ra_{N,\g} :=\,\sum_{k=1}^N \lambda_{k,N}\left[ f(x_{k,N},y_{k,N})g(x_{k,N},y_{k,N}) + f(x_{k,N},-y_{k,N})g(x_{k,N},-y_{k,N}) \right],  \label{eq:Gip}
\end{align}
where $x_{k,N}$ and $\lambda_{k,N}$ are the $N$-point Gauss quadrature nodes and weights and $y_{k,N} = \sqrt{\phi(x_{k,N})}$. From Proposition~\ref{prop:intviaquad}, we require a system of $2N$ orthogonal functions with respect to $ \la f,g \ra_{N,\g}$ to construct an interpolant expanded in an orthogonal basis. The next result shows that only $2N-1$ functions from the orthogonal basis on $\g$ given in Theorem~\ref{thm:OPbasis}  are orthogonal with respect to $ \la f,g \ra_{N,\g}$.
\begin{prop}
With $n = 2m+1$ and $N = N_n = 3m + 1$, the $2N-1$ functions
\begin{align}
\left\lbrace Y_0, Y_{1,1}, Y_{1,2} \right\rbrace \cup \left\lbrace  Y_{k,1}, Y_{k,2}, Y_{k,3}  \right\rbrace_{k=2}^{n-1}\cup \lbrace Y_{n,3} \rbrace = \left\lbrace p_k(w), yp_k(\phi w)  \right\rbrace_{k = 0}^{3m-1}\cup \left\lbrace  p_{3m}(w)  \right\rbrace, \label{eq:oddsetg}
\end{align}
and with $n = 2m$ and $N = N_n = 3m$, the $2N-1$ functions
\begin{align}
\left\lbrace Y_0, Y_{1,1}, Y_{1,2} \right\rbrace \cup \left\lbrace  Y_{k,1}, Y_{k,2}, Y_{k,3}  \right\rbrace_{k=2}^{n} \setminus \lbrace Y_{n,1} \rbrace = \left\lbrace p_k(w), yp_k(\phi w)  \right\rbrace_{k = 0}^{3m-2}\cup \left\lbrace  p_{3m-1}(w)  \right\rbrace   \label{eq:evensetg}
\end{align}
are the largest sets of functions from among the $Y_{k,i}$ defined in Theorem~\ref{thm:OPbasis} that 
are orthogonal and have  nonzero norms with respect to $\la\cdot,\cdot\ra_{N,\g}$. 
\end{prop}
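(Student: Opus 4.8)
The plan is to exploit the even/odd symmetry $y\mapsto-y$ of the discrete form exactly as in the proof of Theorem~\ref{thm:OPbasis}, reducing everything to two univariate problems. Writing the $2N$ nodes as $(x_{k,N},\pm y_{k,N})$ with $y_{k,N}=\sqrt{\phi(x_{k,N})}>0$ (the zeros of $p_N(w)$ lie in the interior of $\Omega_\g$, so $\phi(x_{k,N})>0$), I would first record from \eqref{eq:Gip} the three identities $\la f(x),yg(x)\ra_{N,\g}=0$, then $\la f(x),g(x)\ra_{N,\g}=2\la f,g\ra_N$, and $\la yf(x),yg(x)\ra_{N,\g}=2\la f,g\ra'$, where $\la g,h\ra':=\sum_{k}\lambda_{k,N}\phi(x_{k,N})g(x_{k,N})h(x_{k,N})$. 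Since each $Y_{k,i}$ is either of the form $f(x)$ (the $p_\ell(w)$) or $yg(x)$ (the $yp_\ell(\phi w)$), the first identity shows the two types are automatically orthogonal, so the problem splits into an \emph{even} problem for $\{p_\ell(w)\}$ and an \emph{odd} problem for $\{yp_\ell(\phi w)\}$.

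For the positive direction I would invoke exactness of the $N$-point Gauss rule (to degree $2N-1$). For the even functions, $\la p_k(w),p_\ell(w)\ra_N=h_k(w)\delta_{k\ell}$ whenever $k+\ell\le 2N-1$, which holds for all $k,\ell\le N-1$; this yields orthogonality and nonzero norms for $\{p_0(w),\dots,p_{N-1}(w)\}$. For the odd functions, $\la p_k(\phi w),p_\ell(\phi w)\ra'$ reproduces the exact $\la p_k(\phi w),p_\ell(\phi w)\ra_{\phi w}=h_k(\phi w)\delta_{k\ell}$ precisely when $\deg(\phi\,p_kp_\ell)=3+k+\ell\le 2N-1$, i.e.\ $k+\ell\le 2N-4$, which is satisfied for all $k,\ell\le N-2$; hence $\{yp_0(\phi w),\dots,yp_{N-2}(\phi w)\}$ is orthogonal with nonzero norms. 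Matching these against the lists in Theorem~\ref{thm:OPbasis} and checking the index bookkeeping identifies exactly the sets \eqref{eq:oddsetg} and \eqref{eq:evensetg}, of total size $N+(N-1)=2N-1$.

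It remains to prove maximality. Here I would use that $\la\cdot,\cdot\ra_{N,\g}$ is positive definite on the $2N$-dimensional space of functions on the $2N$ distinct nodes (all weights positive and $\phi(x_{k,N})>0$), and splits orthogonally into even and odd parts, each of dimension $N$. The even part is saturated: the restrictions of $p_0(w),\dots,p_{N-1}(w)$ already form a basis of it, $p_N(w)$ has zero norm since it vanishes at every node, and any $p_\ell(w)$ with $\ell>N$ agrees at the nodes with its reduction modulo $p_N(w)$, hence lies in their span and cannot be orthogonal to all of them with nonzero norm. Thus the even contribution is exactly $N$, and the whole deficiency must occur in the odd part. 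For the odd part, the decisive computation is that $yp_{N-1}(\phi w)$ cannot be adjoined to $\{yp_0(\phi w),\dots,yp_{N-2}(\phi w)\}$: the pair $(p_{N-1}(\phi w),p_{N-2}(\phi w))$ produces $\phi\,p_{N-1}p_{N-2}$ of degree exactly $2N$, and the Gauss error of a degree-$2N$ polynomial is a fixed nonzero constant times its leading coefficient, here $a_0\cdot(\text{leading coeff of }p_{N-1}(\phi w))\cdot(\text{leading coeff of }p_{N-2}(\phi w))\ne 0$; since the exact $\phi w$-inner product vanishes, $\la yp_{N-1}(\phi w),yp_{N-2}(\phi w)\ra_{N,\g}\ne 0$, breaking orthogonality.

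The hard part is upgrading this non-extendability into genuine maximality over \emph{all} subsets of the odd basis functions, i.e.\ showing no collection $\{yp_{k_1}(\phi w),\dots\}$ of size $N$ is orthogonal with nonzero norms. The obstruction is that once a factor $p_k(\phi w)$ with $k\ge N$ enters, the product $\phi\,p_kp_\ell$ has degree $>2N-1$ and the quadrature error is no longer governed by a single leading coefficient, so accidental orthogonality must be excluded. I would handle this by passing to reductions modulo $p_N(w)$: in the odd space $\Pi_{N-1}^{(x)}$ equipped with $\la\cdot,\cdot\ra'$, the vectors $p_0(\phi w),\dots,p_{N-2}(\phi w)$ are an orthogonal basis of the hyperplane $\Pi_{N-2}^{(x)}$, so any $N$-th orthogonal vector must lie along the unique $\la\cdot,\cdot\ra'$-orthocomplement direction $v^\ast$, which I would compute to be $v^\ast=p_{N-1}(\phi w)-\tfrac{\epsilon}{h_{N-2}(\phi w)}\,p_{N-2}(\phi w)$ with $\epsilon=\la p_{N-1}(\phi w),p_{N-2}(\phi w)\ra'\ne 0$ the error above. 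Showing that no reduction of $p_k(\phi w)$, $k\ge N-1$, is proportional to $v^\ast$ then pins the odd contribution at $N-1$ and yields the claimed maximal size $2N-1$.
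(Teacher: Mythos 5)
Your proposal follows essentially the same route as the paper's proof: split the $Y_{k,i}$ into the even family $p_k(w)$ and the odd family $yp_k(\phi w)$ via the $y\mapsto -y$ symmetry of \eqref{eq:Gip}, use exactness of $N$-point Gauss quadrature up to degree $2N-1$ to obtain discrete orthogonality and positive norms for $p_k(w)$, $0\le k\le N-1$, and for $yp_k(\phi w)$, $0\le k\le N-2$ (the degree-$3$ factor $\phi$ costing the one lost index), and finally argue non-extendability. Where the two arguments overlap, yours is the more complete one: the paper's maximality discussion consists of the assertion that ``the aforementioned bounds on the indices'' together with the vanishing of $p_N(w)$ at the nodes rule out adjoining $p_k(w)$, $k\ge N$, or $yp_k(\phi w)$, $k\ge N-1$, plus the single pairing $\la y p_{N-1}(\phi w), y p_{N-2}(\phi w)\ra_{N,\g} = 2\la \phi\, p_{N-1}(\phi w), p_{N-2}(\phi w)\ra_N \ne 0$ stated without justification. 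Your restriction-to-the-nodes dimension count supplies the missing argument for the even family (orthogonality to $p_0(w),\dots,p_{N-1}(w)$, whose node values span the $N$-dimensional even node space, forces zero node values and hence zero norm), and your observation that the Gauss error of a polynomial of degree exactly $2N$ is a fixed nonzero multiple of its leading coefficient is precisely what makes the paper's ``$\ne 0$'' claim rigorous.

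The step you explicitly leave open --- that no $p_k(\phi w)$ with $k\ge N$ can have node values proportional to the orthocomplement direction $v^\ast=p_{N-1}(\phi w)-\tfrac{\epsilon}{h_{N-2}(\phi w)}\,p_{N-2}(\phi w)$ (your formula for $v^\ast$ is correct) --- is a genuine gap in your write-up, but be aware that the paper does not close it either: once $\deg\bigl(\phi\, p_k(\phi w)p_j(\phi w)\bigr)>2N$, failure of quadrature exactness, which is all the ``bounds on the indices'' provide, does not by itself exclude accidental discrete orthogonality, and this is exactly the obstruction you identify. So your attempt is at least as rigorous as the published proof; fully completing it would require an argument that neither you nor the paper supplies. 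One further caution: you frame maximality as maximum cardinality over \emph{all} subsets of the $Y_{k,i}$, which is stronger than what the paper establishes (non-extendability of the two displayed sets), but your own $v^\ast$ argument, which presumes the other $N-1$ odd vectors are $p_0(\phi w),\dots,p_{N-2}(\phi w)$, also only delivers non-extendability; an arbitrary size-$N$ orthogonal subset of the odd family would need a separate argument.
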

\begin{proof}
From the symmetry of $\la\cdot,\cdot\ra_{N,\g}$, we have, similar to the property (\ref{eq:ipd-parity}) of the continuous inner product,
\begin{align*}
\la f(x),yg(x) \ra_{N,\g} = 0.
\end{align*}
This proves the orthogonality between the basis functions $Y_{k,i}$ of the form $f(x)$ and those of the form $yg(x)$. To demonstrate orthogonality between the remaining $Y_{k,i}$ (those that are both of the from $f(x)$ or both of the form $yf(x)$), we note that since $N$-point Gaussian quadrature is exact for polynomials of degree $\leq 2N - 1$,
\begin{align}
\la p_k(w),p_j(w) \ra_{N,\g} = 2\la p_k(w),p_j(w) \ra_{N} = 2\la p_k(w),p_j(w) \ra_{w}=2\delta_{k,j}h_k(w),  \label{eq:2to1gx}
\end{align}
for $0 \leq k, j \leq N - 1$. Since $\phi$ has degree 3,
\begin{align} \label{eq:2to1gyx}
 \la yp_k(\phi w),yp_j(\phi w) \ra_{N,\g} \,& = 2\la \phi p_k(\phi w),p_j(\phi w) \ra_{N} \\
      & =  2\la p_k(\phi w),p_j(\phi w) \ra_{\phi w} = 2\delta_{k,j}h_k(\phi w), \notag
\end{align}
for $0 \leq k,j \leq N - 2$.  Equations (\ref{eq:2to1gx}) and (\ref{eq:2to1gyx}) demonstrate the orthogonality of the sets of functions in (\ref{eq:evensetg}) and (\ref{eq:oddsetg}) as well as their nonzero norms. The aforementioned bounds on the indices $k$ and $j$ and the fact that  $p_N(w)$ vanishes at all the quadrature nodes imply that it is not possible  to add a function $p_k(w)$, with $k \geq N$ or $yp_k(\phi w)$, with $k \geq N-1$ to the sets (\ref{eq:evensetg}) or (\ref{eq:oddsetg}) such that it is orthogonal and have a  nonzero norm with respect to $\la\cdot,\cdot\ra_{N,\g}$. In particular, it is not possible to add $yp_{N-1}$ to  (\ref{eq:evensetg}) or (\ref{eq:oddsetg}) since $\la yp_{N-1},yp_{N-2}\ra_{N,\g} = 2\la \phi p_{N-1},p_{N-2}\ra_{N} \neq 0$.
\end{proof}
We conclude that it is not possible to construct an interpolant via Gauss quadrature in the manner of Proposition~\ref{prop:intviaquad}. It is nevertheless possible to construct an interpolant \`a la Proposition~\ref{prop:intviaquad} if $\la\cdot, \cdot \ra_{\g,w}$ is discretised via Gauss--Radau or Gauss--Lobatto quadrature~\cite{gautschi}. 

As shown in the proof of Theorem~\ref{thm:laginterp}, the set of $2N$ functions $\lbrace p_k(w), yp_n(\phi w) \rbrace_{k = 0}^{N-1}$ at the $2N$ nodes $(x_{k,N},\pm y_{k,N})$, $k = 1, \ldots, N$ are linearly independent. Hence, the interpolant in Theorem~\ref{thm:laginterp} can be represented in the form
\begin{align}
\CL_n (w; f, x,y) = \sum_{k = 0}^{N-1} a_{k,N} p_k(w;x) + y\sum_{k = 0}^{N-1} b_{k,N} p_k(\phi w;x),   \label{eq:b1interpexp}
\end{align}
and the coefficients $a_{k,N}$ and $b_{k,N}$ can be obtained by solving a $2N \times 2N$ Vandermonde-like linear system whose columns consist of $p_k(w)$ and $yp_n(\phi w)$, evaluated at $(x_{k,N},\pm y_{k,N})$ for $k = 1, \ldots, N$. However, since this procedure requires $\mathcal{O}(N^3)$ operations, we introduce an alternative method with which the coefficients can be obtained with a complexity of either  $\mathcal{O}(N^2)$ (for a general weight $w$) or $\mathcal{O}(N \log N)$ (for the Chebyshev weight).
 
We now introduce a new inner product and orthogonal basis with respect to which it is possible to construct an interpolant with Gaussian quadrature. Although the inner product $\la\cdot, \cdot \ra_{\g,w}$ is the natural parametrization of (\ref{eq:ipd1}), the new inner product gives rise to a new orthogonal basis that is more efficient for computational purposes compared to the basis in Theorem~\ref{thm:OPbasis}, as we shall demonstrate.

 We define the inner product $\left[ \cdot,\cdot\right]_{\g,w}$ as 
\begin{align} \label{eq:ipd3}
  \left[ f,g \right]_{\g,w} := \int_{\Omega_\g} & \left[  f_e\left(x \right) g_e\left(x \right) + f_o\left(x \right) g_o\left(x \right)\right] w(x) \d x ,
\end{align}
where $f_e, g_e, f_o, g_o$ are the even and odd parts, respectively, of functions $f(x,y)$ and $g(x,y)$ on $\g$, as defined in (\ref{eq:fe-fo}). Note that the odd part of functions on $\g$ have removable singularities at points $(x,y)$ where $y = \sqrt{\phi(x)} = 0$.

In the inner product space defined by $\left[ \cdot,\cdot\right]_{\g,w}$, the orthogonal polynomial basis on $\g$ is exactly the same as in Theorem~\ref{thm:OPbasis} (where the inner product space is equipped with $\la \cdot,\cdot\ra_{\g,w}$), except that the $Y_{n,i}$ that are of the form $yp_k(\phi w)$ are replaced by $yp_k( w)$. That is, whereas the orthogonal polynomial basis in Theorem~\ref{thm:OPbasis} is constructed from $p_n(w)$ and $p_n(\phi w)$, the orthogonal basis with respect to $\left[ \cdot,\cdot\right]_{\g,w}$ is constructed solely out of $p_n(w)$. Hence, in the inner product space with $\left[ \cdot,\cdot\right]_{\g,w}$, one only requires univariate classical orthogonal polynomials.

The results in Theorem~\ref{thm:partsums} and Corollary~\ref{cor:psumconv} also hold for the orthogonal basis with respect to  $\left[ \cdot,\cdot\right]_{\g,w}$, mutatis mutandis. A notable difference between the inner products $\la\cdot, \cdot \ra_{\g,w}$ and $\left[ \cdot,\cdot\right]_{\g,w}$ is that, unlike the Jacobi operators for $\la\cdot, \cdot \ra_{\g,w}$ in section~\ref{sect:jacops}, the latter gives rise to a non-symmetric Jacobi operator for multiplication by $y$. This is because the inner product $\left[ \cdot, \cdot \right]_{\g,w}$ is not self-adjoint with respect to multiplication by $y$. That is,
 \begin{align*}
\left[ yf(x), yg(x) \right]_{\g,w} \neq \left[ y^2 f(x), g(x) \right]_{\g,w},
\end{align*}
because $\left[ yf(x), yg(x) \right]_{\g,w} = \la f(x), g(x) \ra_{w}$ and $\left[ y^2 f(x), g(x) \right]_{\g,w} = \left[ \phi f(x), g(x) \right]_{\g,w} = \la \phi f(x), g(x) \ra_{w}$. 

\subsection{Interpolation via quadrature with respect to $\left[ \cdot,\cdot\right]_{\g,w}$}\label{sect:interpvquad2}
Discretising $\left[\cdot, \cdot \right]_{\g,w}$ via Gauss quadrature, we obtain
\begin{align*}
  [f,g]_N :=  \sum_{k=1}^N \l_{k,N} \left[ f_e(x_{k,N})g_e (x_{k,N}) + 
          f_o(x_{k,N})g_o (x_{k,N})\right].  
\end{align*}
We shall need the following result.
\begin{prop}\label{prop:discorth}
With $n = 2m+1$ and $N = N_n = 3m + 1$, the $2N$ functions
\begin{align}
\left\lbrace Y_0, Y_{1,1}, Y_{1,2} \right\rbrace \cup \left\lbrace  Y_{k,1}, Y_{k,2}, Y_{k,3}  \right\rbrace_{k=2}^n\setminus \lbrace Y_{n,1} \rbrace = \left\lbrace p_k(w), yp_k(w)  \right\rbrace_{k = 0}^{3m}, \label{eq:oddset}
\end{align}
and with $n = 2m$ and $N = N_n = 3m$, the $2N$ functions
\begin{align}
\left\lbrace Y_0, Y_{1,1}, Y_{1,2} \right\rbrace \cup \left\lbrace  Y_{k,1}, Y_{k,2}, Y_{k,3}  \right\rbrace_{k=2}^n\cup \lbrace Y_{n+1,3} \rbrace \setminus \lbrace Y_{n,1} \rbrace = \left\lbrace p_k(w), yp_k(w)  \right\rbrace_{k = 0}^{3m-1}   \label{eq:evenset}
\end{align}
are orthogonal and have nonzero norms  with respect to $[\cdot,\cdot]_N$.
\end{prop}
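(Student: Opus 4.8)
The plan is to exploit, now at the discrete level, the same even/odd parity decomposition used throughout. First I would record how the even and odd parts behave on the two types of basis function. A function of the form $f(x,y) = p_k(w;x)$ depends only on $x$, so its even part is $f_e = p_k(w)$ and its odd part is $f_o = 0$. A function of the form $g(x,y) = y\,p_j(w;x)$ has even part $g_e = 0$, while from the definition \eqref{eq:fe-fo} its odd part is $g_o(x) = \frac{\sqrt{\phi}\,p_j - (-\sqrt{\phi})\,p_j}{2\sqrt{\phi}} = p_j(w;x)$, the factor $\sqrt{\phi}$ cancelling exactly. This is the crucial point distinguishing $[\cdot,\cdot]_N$ from $\la\cdot,\cdot\ra_{N,\g}$: the odd part of $y\,p_j(w)$ is $p_j(w)$ itself, with no inflation by $\phi$.

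Substituting these even/odd parts into $[\cdot,\cdot]_N$, the inner product decouples completely. Between an even-type and an odd-type function one factor of the integrand vanishes in each summand, so $[p_k(w), y\,p_j(w)]_N = 0$ for all $k,j$; this is the discrete analogue of \eqref{eq:ipd-parity}. Within each type the discrete inner product reduces to the univariate discretised inner product,
\begin{align*}
[p_k(w), p_j(w)]_N = \la p_k(w), p_j(w)\ra_N, \qquad [y\,p_k(w), y\,p_j(w)]_N = \la p_k(w), p_j(w)\ra_N.
\end{align*}
Now I would invoke exactness of $N$-point Gauss quadrature: since $p_k(w)p_j(w)$ has degree $k+j$, we have $\la p_k(w), p_j(w)\ra_N = \la p_k(w), p_j(w)\ra_w = \delta_{k,j}\,h_k(w)$ whenever $k+j \le 2N-1$, in particular whenever $0 \le k,j \le N-1$.

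The proof then closes by checking that the stated index ranges keep every participating index within $\{0,\ldots,N-1\}$. In the odd case $n = 2m+1$, $N = 3m+1$, the listed functions are exactly $\{p_k(w), y\,p_k(w)\}_{k=0}^{3m}$ and $3m = N-1$; in the even case $n = 2m$, $N = 3m$, they are $\{p_k(w), y\,p_k(w)\}_{k=0}^{3m-1}$ and $3m-1 = N-1$. Thus all indices satisfy $k,j \le N-1$, so both the pairwise orthogonality and the nonvanishing of the norms $h_k(w) > 0$ follow from the displayed reduction. I would also verify the set equalities in \eqref{eq:oddset} and \eqref{eq:evenset} by the bookkeeping that reindexes the union of the $Y_{k,i}$ (with each $y\,p_k(\phi w)$ replaced by $y\,p_k(w)$): the even-type members $Y_0, Y_{1,1}$ together with the $Y_{k,1}, Y_{k,2}$ supply $p_0(w),\ldots,p_{N-1}(w)$ exactly once each after removing $Y_{n,1}$, while the odd-type members $Y_{1,2}$, the $Y_{k,3}$, and the $Y_{2j+1,2}$ supply $y\,p_0(w),\ldots,y\,p_{N-1}(w)$.

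The computation is essentially bookkeeping, so there is no deep obstacle; the one point requiring care, and the reason the count here is $2N$ rather than the $2N-1$ of the preceding proposition, is precisely the cancellation of $\sqrt{\phi}$ in $g_o$, which prevents the degree inflation that in the $\la\cdot,\cdot\ra_{N,\g}$ setting forced the odd family to be $y\,p_k(\phi w)$ and capped the admissible index at $N-2$.
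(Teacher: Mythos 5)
Your proposal is correct and follows essentially the same route as the paper's proof: compute the even/odd parts of $p_k(w)$ and $yp_k(w)$ (noting the exact cancellation of $\sqrt{\phi}$ in the odd part), deduce cross-type orthogonality from the decoupled form of $[\cdot,\cdot]_N$, and reduce same-type inner products to $\la p_k(w),p_j(w)\ra_N$, which Gauss quadrature exactness evaluates as $\delta_{k,j}h_k(w)$ for $k,j\le N-1$. Your added index bookkeeping and the closing remark contrasting with the $\la\cdot,\cdot\ra_{N,\g}$ case are consistent with, and slightly more explicit than, the paper's argument.
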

\begin{proof}
It follows from the definition (\ref{eq:fe-fo}) that for a function on $\g$ that depends only on $x$,
\begin{align*}
\left( f(x) \right)_e = f(x), \qquad \left( f(x) \right)_o = 0, 
\end{align*}
and for a function on $\g$ of the form $yg(x)$,
\begin{align*}
\left( yg(x) \right)_e = 0, \qquad \left( yg(x)  \right)_o = g(x). 
\end{align*}
Hence, $\left[ f(x), y g(x) \right]_{N} = 0$, which proves orthogonality between functions in the sets (\ref{eq:evenset}) and (\ref{eq:oddset}) of the form $p_k(w)$ and those of the form $yp_k(w)$.
We note that since the $N$-point Gaussian quadrature rule is exact for polynomials of degree $\leq 2N-1$, it follows that
\begin{align*}
 [p_k(w), p_j(w)]_N \, &= \la p_k(w), p_j(w)\ra_N =\la p_k(w), p_j(w)\ra_w=\delta_{k,j}h_k(w), 
 \\
 [y p_k(w), y p_j(w)]_N \, & = \la p_k(w), p_j(w) \ra_N  = \la p_k(w), p_j(w)\ra_w=\delta_{k,j}h_k(w),
\end{align*}
for $0 \le k, j \le  N-1$.
This demonstrates both the orthogonality between functions in (\ref{eq:evenset}) and (\ref{eq:oddset}) that are both of the form $f(x)$ or both of the form $yf(x)$ and the nonzero norms of the functions in (\ref{eq:evenset}) and (\ref{eq:oddset}) with respect to $[\cdot,\cdot]_N$.  
\end{proof}
\begin{thm}\label{thm:interpviaquadgauss}
The interpolant defined in Theorem~\ref{thm:laginterp} can be represented in the form
\begin{align}
\CL_n (w; f, x,y) = \sum_{k = 0}^{N-1} a_{k,N} p_k(w;x) + y\sum_{k = 0}^{N-1} b_{k,N} p_k(w;x),  \label{eq:interpexp}
\end{align}
with
\begin{align*}
a_{k,N} =  \frac{\la  f_e, p_k(w) \ra_{N}}{h_k(w)}, \qquad b_{k,N} =   \frac{\la  f_o, p_k(w) \ra_{N}}{h_k(w)}, \qquad 0 \leq k \leq N-1,
\end{align*}
and where $N = N_n = 3m+1$ if $n = 2m+1$ and $N = N_n = 3m$ if $n = 2m$.
\end{thm}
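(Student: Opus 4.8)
The plan is to obtain the representation \eqref{eq:interpexp} directly from the decomposition of the interpolant established in Theorem~\ref{thm:laginterp}, and then to recognise the coefficients as the output of the univariate expansion \eqref{eq:uniexp}. Recall from \eqref{eq:interp} that
$$
  \CL_n(w;f,x,y) = L_N(w;f_e,x) + y\, L_N(w;f_o,x),
$$
so the whole statement reduces to expanding the two univariate Lagrange interpolants $L_N(w;f_e)$ and $L_N(w;f_o)$ in the basis $\{p_k(w)\}_{k=0}^{N-1}$. First I would apply \eqref{eq:uniexp} once with $f$ replaced by $f_e$ and once with $f$ replaced by $f_o$; since the Lagrange interpolant at the $N$ nodes $x_{k,N}$ has degree at most $N-1$, this gives
$$
  L_N(w;f_e,x) = \sum_{k=0}^{N-1} a_{k,N}\, p_k(w;x), \qquad L_N(w;f_o,x) = \sum_{k=0}^{N-1} b_{k,N}\, p_k(w;x),
$$
with $a_{k,N} = \la f_e, p_k(w)\ra_N / h_k(w)$ and $b_{k,N} = \la f_o, p_k(w)\ra_N / h_k(w)$, which are exactly the coefficients in the statement. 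Substituting these two expansions back into the displayed decomposition of $\CL_n(w;f)$ yields \eqref{eq:interpexp}.

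The same conclusion can be reached, perhaps more in the spirit of this subsection, by invoking Proposition~\ref{prop:intviaquad} for the discrete inner product $[\cdot,\cdot]_N$ together with the $[\cdot,\cdot]_N$-orthogonal system of $2N$ functions $\{p_k(w),\, yp_k(w)\}_{k=0}^{N-1}$ supplied by Proposition~\ref{prop:discorth} (these are precisely the sets \eqref{eq:oddset} and \eqref{eq:evenset}). Here I would evaluate the two families of expansion coefficients $[f,\phi_j]_N/[\phi_j,\phi_j]_N$ by splitting into even and odd parts: using $(p_k(w))_e = p_k(w)$, $(p_k(w))_o = 0$, $(yp_k(w))_e = 0$ and $(yp_k(w))_o = p_k(w)$ from the proof of Proposition~\ref{prop:discorth}, the coefficient of $p_k(w)$ collapses to $\la f_e, p_k(w)\ra_N / h_k(w) = a_{k,N}$ and that of $yp_k(w)$ to $\la f_o, p_k(w)\ra_N / h_k(w) = b_{k,N}$. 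Proposition~\ref{prop:intviaquad} then guarantees that the resulting expansion interpolates $f$ at the $2N$ nodes $(x_{k,N},\pm y_{k,N})$; since it lies in the same span as $\CL_n(w;f)$ and interpolates at the same nodes, the uniqueness proved in Theorem~\ref{thm:laginterp} identifies the two, recovering \eqref{eq:interpexp}.

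I do not expect a serious obstacle; the content is in two routine checks that I would nonetheless carry out carefully. The first is the reduction of the bivariate discrete pairings to univariate ones, $[f,p_k(w)]_N = \la f_e, p_k(w)\ra_N$ and $[f,yp_k(w)]_N = \la f_o, p_k(w)\ra_N$, which rests on the even/odd decomposition \eqref{eq:fe-fo} and requires $f_o(x_{k,N})$ to be well defined; this holds because the nodes are interior zeros of $p_N(w)$, so $y_{k,N} = \sqrt{\phi(x_{k,N})} > 0$ and the removable singularity of $f_o$ never falls on a node. The second is the index bookkeeping for the two parities: for $n = 2m+1$ we have $N = 3m+1$ and both univariate interpolants land in $\Pi_{3m}^{(x)} = \mathrm{span}\{p_k(w)\}_{k=0}^{N-1}$, while for $n = 2m$ we have $N = 3m$ and they land in $\Pi_{3m-1}^{(x)}$; in either case the two sums run exactly over $0 \le k \le N-1$, matching the stated range and confirming that $\CL_n(w;f)$ sits in the span of the $2N$-function system of Proposition~\ref{prop:discorth}.
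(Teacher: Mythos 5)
Your first argument is correct and is essentially the paper's own proof: the paper likewise reduces everything to the univariate expansion (\ref{eq:uniexp}) applied to $f_e$ and $f_o$ together with the decomposition (\ref{eq:interp}), only it runs the chain in the opposite direction, first identifying $a_{k,N}$ and $b_{k,N}$ with the discrete coefficients $\left[f,p_k(w)\right]_N/\left[p_k(w),p_k(w)\right]_N$ and $\left[f,yp_k(w)\right]_N/\left[yp_k(w),yp_k(w)\right]_N$ via Proposition~\ref{prop:discorth}, then concluding from (\ref{eq:uniexp}) that $\sum_{k}a_{k,N}p_k(w) = L_N(w;f_e)$ and $\sum_{k}b_{k,N}p_k(w) = L_N(w;f_o)$, and finally comparing with (\ref{eq:interp}). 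Your version is, if anything, more streamlined, since it never needs the bracket inner product at all.

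One caution about your alternative second route: Proposition~\ref{prop:intviaquad} as stated applies only to discrete inner products that are sums of point evaluations, $\sum_j w_j f(x_j,y_j)g(x_j,y_j)$, and $[\cdot,\cdot]_N$ is \emph{not} of this form. Expanding $f_e g_e + f_o g_o$ at a node $x_{k,N}$ in terms of the values $f(x_{k,N},\pm y_{k,N})$ and $g(x_{k,N},\pm y_{k,N})$ produces cross terms such as $f(x_{k,N},y_{k,N})\,g(x_{k,N},-y_{k,N})$ with coefficient $\tfrac14\bigl(1-1/\phi(x_{k,N})\bigr)\lambda_{k,N}$, which is nonzero in general. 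So you cannot invoke that proposition verbatim; to make the second route rigorous you would either have to observe that its proof only uses positivity of the form and the fact that $[g,g]_N=0$ forces $g$ to vanish at all $2N$ nodes (true here, since $g_e(x_{k,N})=g_o(x_{k,N})=0$ and $y_{k,N}>0$ imply $g(x_{k,N},\pm y_{k,N})=0$), or simply discard it in favour of your first argument, which is complete on its own.
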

\begin{proof}
It follows from Proposition~\ref{prop:discorth} that for $0 \leq k \leq N-1$,
\begin{align*}
 \frac{\left[  f, p_k(w) \right]_{N}}{\left[ p_k(w), p_k(w) \right]_{N}} =  \frac{\la  f_e, p_k(w) \ra_{N}}{\la p_k(w), p_k(w) \ra_{N}} =  \frac{\la  f_e, p_k(w) \ra_{N}}{h_k(w)} = a_{k,N} ,
\end{align*}
and
\begin{align*}
 \frac{\left[  f, yp_k(w) \right]_{N}}{\left[ yp_k(w), yp_k(w) \right]_{N}} =  \frac{\la  f_o, p_k(w) \ra_{N}}{\la p_k(w), p_k(w) \ra_{N}} =  \frac{\la  f_o, p_k(w) \ra_{N}}{h_k(w)} = b_{k,N}. 
\end{align*}
By the previously mentioned result for univariate interpolants in section~\ref{sect:interpviaquadsect}, this implies that 
\begin{align}
\sum_{k = 0}^{N-1} a_{k,N} p_k(w;x) = L_N(w;f_e,x), \qquad \sum_{k = 0}^{N-1} b_{k,N} p_k(w;x) = L_N(w;f_o,x).   \label{eq:eointerp} 
\end{align}
Comparing (\ref{eq:eointerp}), (\ref{eq:interpexp}) and (\ref{eq:interp}), the result follows.
\end{proof}
Note that if $w$ is chosen to be the Chebyshev weight, one can compute the coefficients of the interpolant, $a_{k,N}$ and $b_{k,N}$, in $\mathcal{O}(N \log N)$ operations  using the fast cosine transform. A fast transform is also available for the uniform Legendre weight $w = 1$~\cite{HT16}, and potentially for any Jacobi weight~\cite{HT}. Since fast transforms are not available for the non-classical polynomials $p_n(\phi w)$, $\mathcal{O}(N^2)$ operations are required to compute the coefficients of the interpolant obtained via (Gauss--Radau or Gauss--Lobatto) quadrature. The inner product $\left[ \cdot, \cdot \right]_{\g,w}$ is therefore preferable to $\la\cdot, \cdot \ra_{\g,w}$ for computing interpolants via quadrature.

\section{Applications}\label{sect:approximation}
\setcounter{equation}{0}

We can approximate functions of the form 
\begin{align}
g(t) := f\left(t,\sqrt{\phi(t)}\right),  \label{eq:singforward}
\end{align}
which have square root-type singularities at the zero(s) of $\phi(t)$, by recasting them as functions $f(x,y)$ on the cubic curve $\gamma = \left\lbrace (x,y) \; : \; y^2 = \phi(x) \right\rbrace$. If $f(x,y)$ is a smooth function of $x$ and $y$ on $\gamma$, then the bivariate interpolant on $\gamma$ will converge much faster compared to the univariate interpolant of $g(t)$. Similarly, if $\phi$ has an inverse $\phi^{-1}$ on an interval, then we can approximate functions of the form
\begin{align}
f\left(\phi^{-1}(t^2),t\right),  \label{eq:singinverse}
\end{align}
which have cubic-type singularities where $\phi^{-1}$ has zeros, by an interpolant $f(x,y)$ on $\gamma$ by setting $y = t$ and $x = \phi^{-1}(t^2)$. First we approximate a function of the form (\ref{eq:singinverse}) and in the next section we consider functions of the form (\ref{eq:singforward}) that arise as solutions to differential equations.

To compute $p_k(w)$ (and $p_k(\phi w)$) we use a variant of the Stieltjes procedure where orthogonal polynomials are calculated via computing the connection coefficients with Legendre polynomials, that is, it computes the expansion
$$
p_k(w) = \sum_{j=0}^k c_{kj} \tilde P_k
$$
where $\tilde P_k$ are normalized Legendre polynomials. Provided $w$ is a polynomial then the inner products are computable exactly via the Legendre Jacobi operator $J$, that is,
$$
\langle \tilde P_k, \tilde P_j\rangle_w = {\bf e}_k^\top w(J) {\bf e}_j.
$$
Therefore we can readily determine $c_{kj}$ by orthogonalizing via Gram--Schmidt\footnote{Equivalently, this can be viewed as Lanczos iteration with a non-standard, banded inner product as explained in \cite{Ol}. A convenient implementation is available in the Julia package  OrthogonalPolynomialsQuasi.jl~\cite{O4}}.  This representation makes differentiation straightforward as we know $P_k'(x) = (k+1)/2 P_{k-1}^{(1,1)}(x)$ \cite[(18.9.15)]{DLMF}.  In practice, we take the Legendre weight $w(x) = 1$ so that $p_k(w) = \tilde P_k$ and we need only calculate $p_k(\phi)$.

\subsection{Function approximation}
To approximate the function
\begin{align}
g(t) =  J_1(10t + 20\sqrt[3]{t^2 + \epsilon^2}), \qquad t \in [-1, 1], \label{eq:funapproxex1}
\end{align}
where $J$ denotes the Bessel function of the first kind, we can set  $y^2 = \phi(x) = x^3 - \epsilon^2$, hence $\phi^{-1}(y^2) = \sqrt[3]{y^2 + \epsilon^2}$. Then
\begin{align*}
g(t) = f(x,y) = J_1(10y + 20x),
\end{align*} 
which is defined on $\gamma$, where
\begin{align}
\gamma = \left\lbrace (x,y) \; : \; y^2 = x^3 - \epsilon^2, \: y \in [-1, 1], \: x \in [\epsilon^{2/3}, (1 + \epsilon^2)^{1/3}] \right\rbrace.  \label{eq:cubiccurveex1}
\end{align}

For comparison purposes with standard bases, we also approximate (\ref{eq:funapproxex1}) using algebraic Hermite--Pad\'e (HP) approximation~\cite{QP}. Given the function values $f(x_{k,N})$, $1 \leq k \leq N$, $x_{k,N} \in [a, b]$, to find the HP approximant of $f$ on $[a, b]$, we require polynomials  $p_0, \ldots, p_m$ on $[a,b]$ of specified degrees $d_0, \ldots, d_m$ such that 
\begin{equation}
\| p_0 + p_1 f + p_2 f^2 + \cdots + p_m f^m \|_{N} = \text{minimum}. \label{eq:minprob}
\end{equation}
Here, the norm $\| \cdot \|_N^2   := \langle \cdot, \cdot \rangle_{N}$ is induced by the following discrete inner product
\begin{align}
\langle f, g \rangle_{N} = \sum_{k = 1}^{N} w_k f(x_{k,N}) g(x_{k,N}).  \label{eq:HPdiscinnerprod}
\end{align} 
We assume some kind of normalization so that the trivial solution $p_0 = \ldots = p_m = 0$ is not admissible. The HP approximant of $f(x)$, viz.~$\psi(x)$, is the algebraic function defined by
\begin{equation}
p_0(x) + p_1(x) \psi(x) + p_2(x) \psi^2(x) + \cdots + p_m(x) \psi^m(x) = 0. \label{eq:psidef}
\end{equation} 
In practice, we compute the polynomials $p_0, \ldots, p_m$ by expanding them in an orthonormal polynomial basis with respect to the discrete inner product (\ref{eq:HPdiscinnerprod}). Then (\ref{eq:minprob}) reduces to a least squares problem whose solution we compute with the SVD. Hence the implicit normalization used is that the vector of polynomial coefficients of $p_0, \ldots, p_m$ in the orthonormal basis is a unit vector. This computational approach is similar to that used in~\cite{gonnet, pachon} for the case $m = 1$, which corresponds to rational interpolation or least squares fitting. 

Throughout we shall consider diagonal HP approximants for which the degrees of the polynomials $p_0, \ldots, p_m$ are equal, say degree $d$. We require that the number of points at which $f$ is sampled is greater than or equals to the number of unknown polynomial coefficients:
\begin{align*}
N \geq m(d+1) + d.
\end{align*}
If $N = m(d+1) + d$, then the minimum attained by the solution to the least squares problem (\ref{eq:minprob}) is zero
\begin{equation}
N = m(d+1)+d, \quad \Rightarrow \quad \| p_0 + p_1 f + p_2f^2 + \cdots + p_mf^m \|_{N} = \text{minimum} = 0. \label{eq:HPinterp}
\end{equation}
We call this the interpolation case. If $N > m(d+1) + d$, then the minimum attained by the least squares solution will be nonzero in general. Throughout we shall approximate functions with HP interpolants.

Note that if $m=1$ and $p_1(x) = 1$ in (\ref{eq:HPinterp}), then the HP approximant, $\psi(x) = -p_0(x)$, is a polynomial interpolant of $f$ on the grid; if $m=1$, then the HP approximant, $\psi = -p_0(x)/p_1(x)$, is a rational interpolant of $f$ (with poles in the complex $x$-plane). If $m \geq 2$, then for every $x$, $\psi(x)$ will generally be an $m$-valued approximant of $f$ (with poles and algebraic branch points in the complex $x$-plane). We want to pick only one branch of the $m$-valued function $\psi$ to approximate $f$. One way to do this is to solve (\ref{eq:psidef}) with Newton's method using a polynomial or rational approximant as first guess. 

Figure~\ref{fig:funapproxex1} compares the rate of convergence to $g(t)$ in (\ref{eq:funapproxex1}) of HP approximants with $m=0, 1, 2, 3$ (polynomial, rational, quadratic and cubic HP interpolants) and interpolants on the cubic curve (\ref{eq:cubiccurveex1}) which were obtained via quadrature in the Chebyshev basis as described in section~\ref{sect:interpvquad2}. The figure shows that the interpolant on the cubic curve $\gamma$ converges super-exponentially (since $f$ is an entire function in $x$ and $y$), which is significantly faster than HP approximants (which in addition appear to have stability/ill-conditioning issues). Moreover, it is robust as $\epsilon \rightarrow 0$, whereas HP interpolants break down in accuracy.

\begin{figure}[ht]
  \begin{center}
    \includegraphics[width=0.495\textwidth]{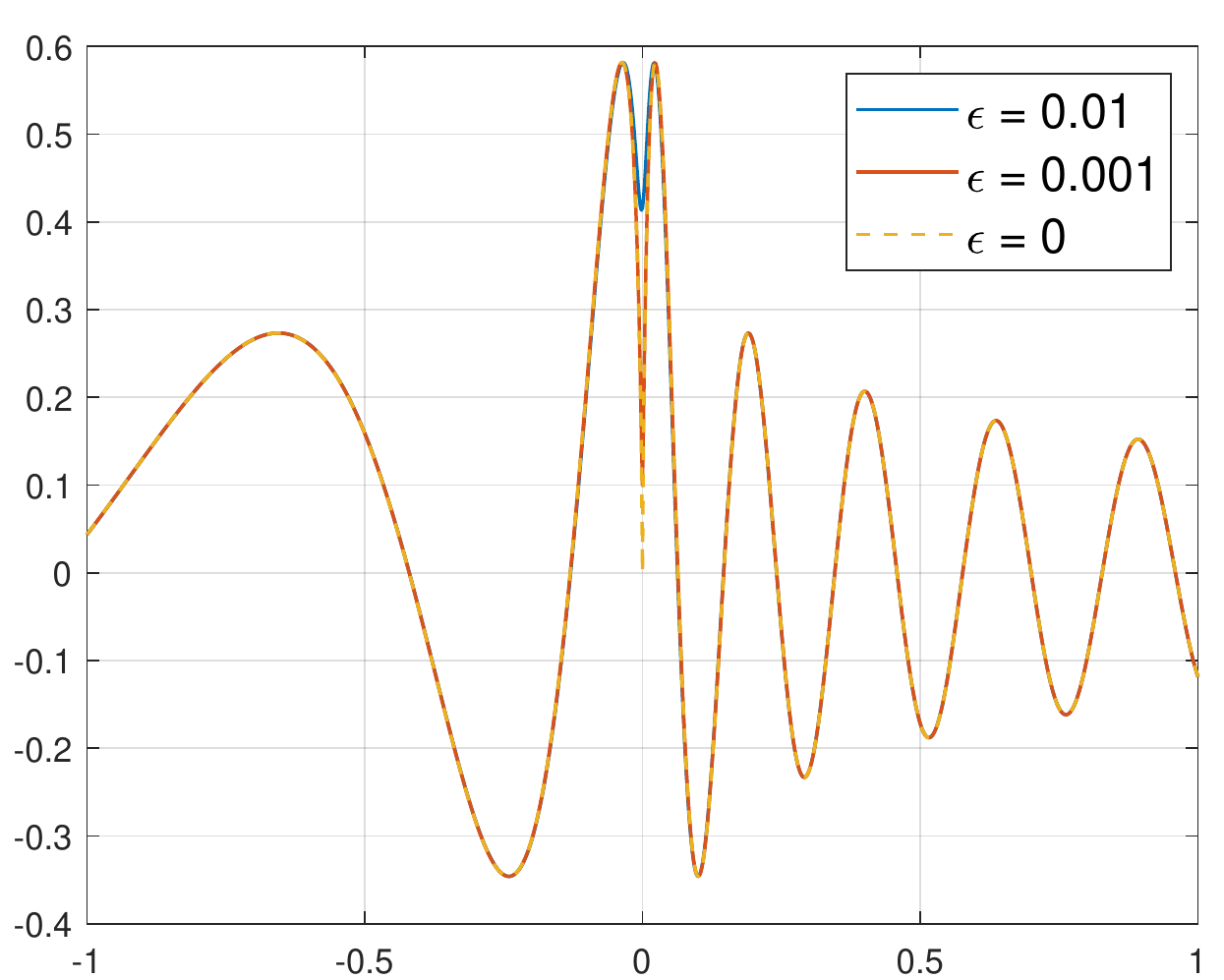}
    \includegraphics[width=0.495\textwidth]{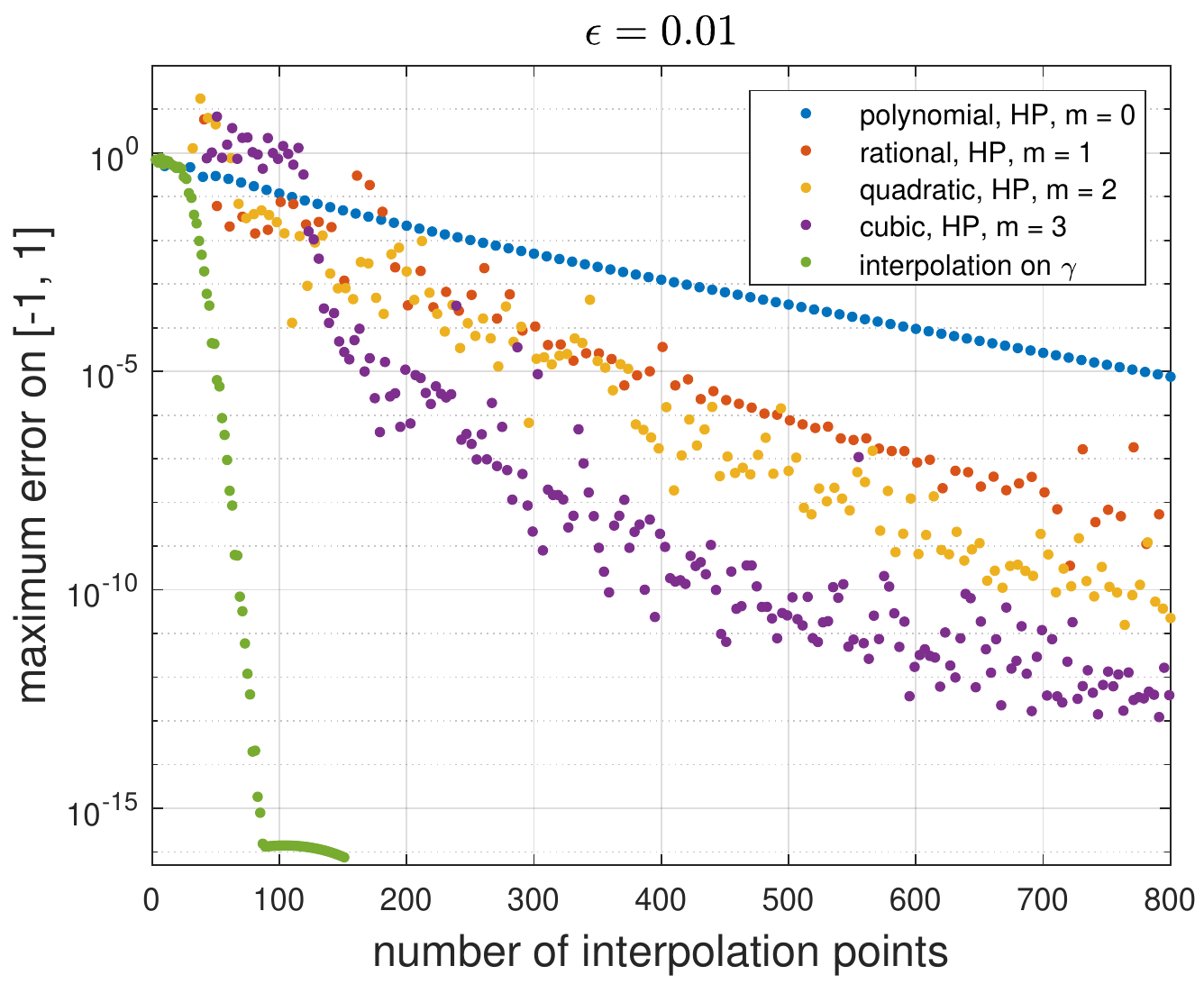}\\
    \includegraphics[width=0.495\textwidth]{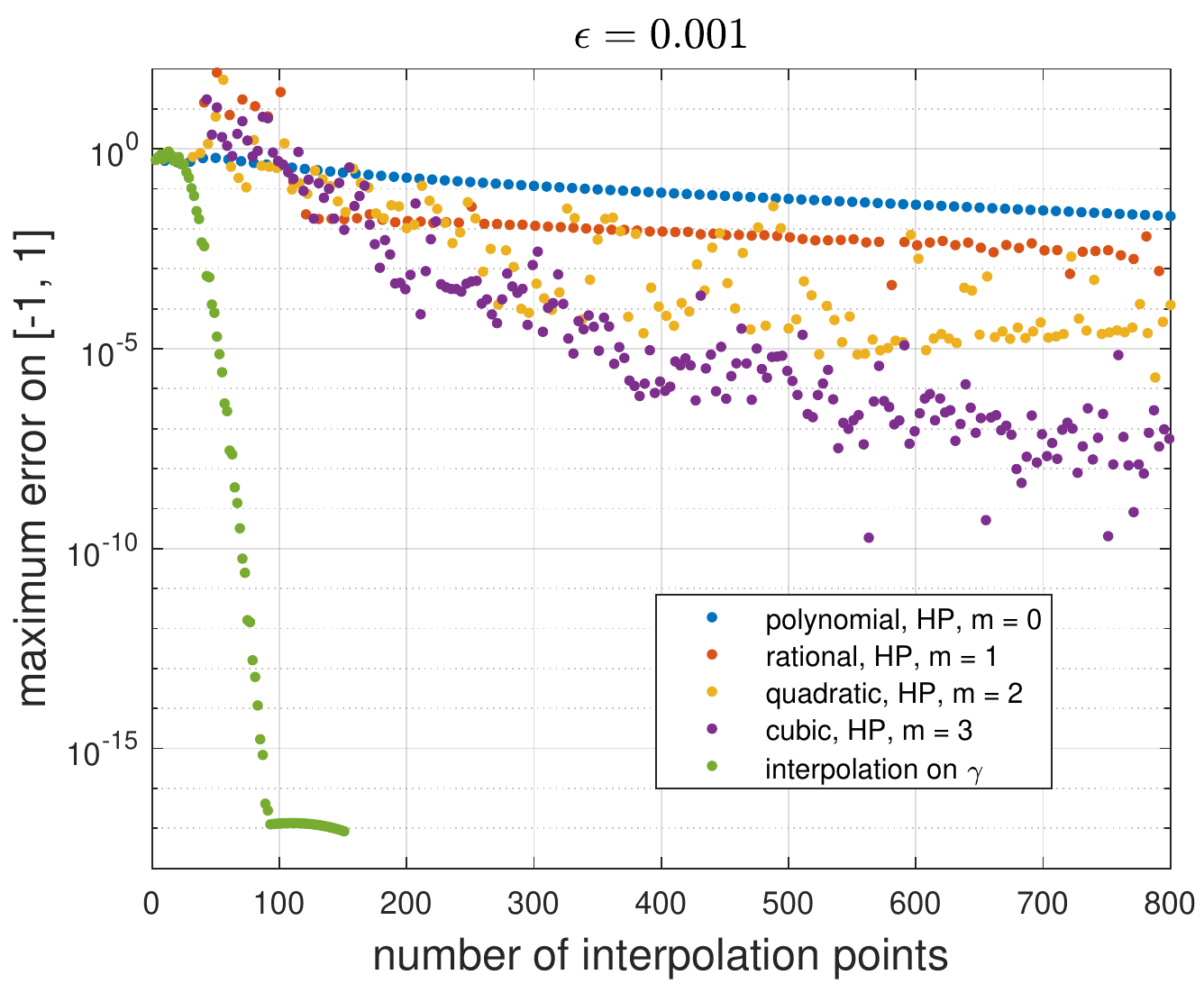}
    \includegraphics[width=0.495\textwidth]{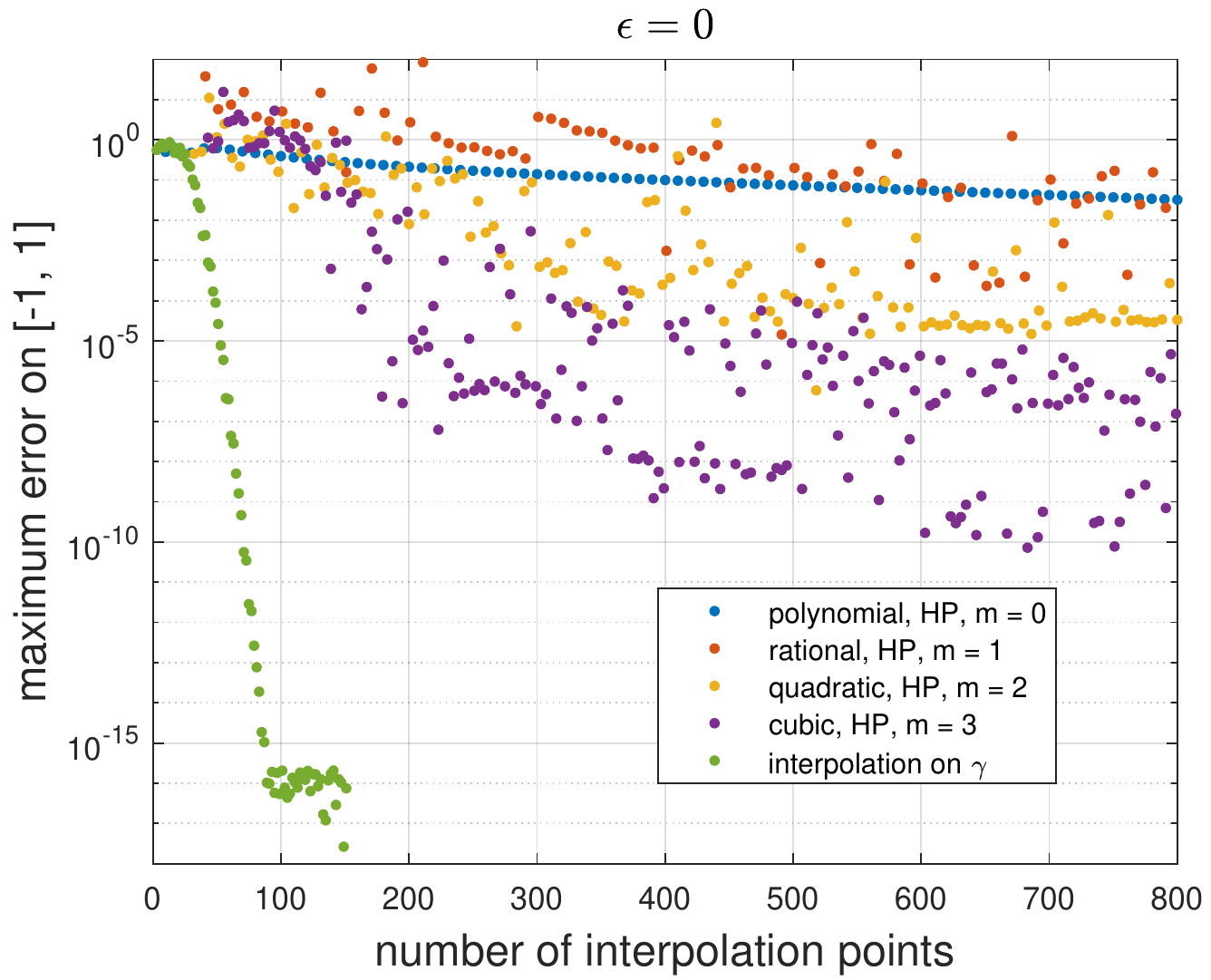}
  \end{center} 
  \caption{The function $g(t) = J_1(10t + 20\sqrt[3]{t^2 + \epsilon^2})$ (top-left) and the  rates of convergence of interpolants of $g$ for $\epsilon = 0.01, 0.001, 0$.  The interpolant based on OPs on the cubic curve converges faster and is more stable than other methods,  especially as $\epsilon \rightarrow 0$.}\label{fig:funapproxex1}
\end{figure}

\subsection{Differential equations on cubic curves}\label{sect:diffeq}

The Bessel function (\ref{eq:funapproxex1}) and other singular functions of the form $f(x,\sqrt{\phi(x)})$ or $f(\phi^{-1}(x^2),x)$ can arise as solutions to linear differential equations of the form
\begin{align}
\sum_{\lambda=0}^{m} a_{\l}(x,y)\frac{\mathrm{d}^{\l}}{\mathrm{d}x^{\l}}u(x) = g(x,y),  \label{eq:gamdesing}
\end{align}
for $y^2 = \phi(x)$, subject to boundary conditions. As discussed in section~\ref{sect:OPseg}, we can let $x \in \mathrm{supp}(w) = [-1, 1] \subset \Omega_{\g}$  or $x \in \mathrm{supp}(w)= [0, \infty)  \subset \Omega_{\g}$. For now, we consider $x \in [-1, 1]$.

Recall that $\phi$ may vanish at the endpoint(s) of $\Omega_{\g}$. If $\phi$ vanishes on $\mathrm{supp}(w)$, which can only happen at the endpoint(s) of $\mathrm{supp}(w)$, the differential equation is singular in $x$. This is to be expected since linear differential equations with singular solutions must be singular. However, this does not imply that the solution will be singular on $\g$.  For example, the Bessel function (\ref{eq:funapproxex1}) is the solution to a differential equation of the form (\ref{eq:gamdesing}) that is a singular function of $x$ but an entire function of $x$ and $y$ on the curve (\ref{eq:cubiccurveex1}).

For the orthogonal basis with respect to the inner product $[\cdot, \cdot]_{\g,w}$, which consists of only $p_n(w)$, solutions to (\ref{eq:gamdesing}) can be computed with the ultraspherical spectral method~\cite{OT13} if the $p_n(w)$ are chosen to be the Chebyshev polynomials. This approach represents (\ref{eq:gamdesing}) as banded matrices in coefficient space, in contrast to the dense matrices that arise in collocation methods. It is also possible to devise a spectral method with banded operators for the basis consisting of $p_n(w)$ and $p_n(\phi w)$. For this basis, unlike the basis consisting solely of $p_n(w)$, the operators are not known explicitly but can be constructed efficiently with quadrature.

 In the examples that follow, however, we compute solutions to (\ref{eq:gamdesing}) with a spectral collocation method
 since we found that it was just as accurate (for the first example below) or more accurate (for the second example) than the ultraspherical spectral method\footnote{The well-conditioning of the ultraspherical method is established in~\cite{OT13} for non-singular linear differential equations. Numerical evidence in~\cite{C19} illustrated superior conditioning and accuracy of the ultraspherical method compared to collocation for a linear equation with regular singularities. An analysis of the ultraspehrical and collocation method for equations with irregular singularities (which can include equations of the form (\ref{eq:gamdesing})) remains an open problem which is beyond the scope of the present paper. }. In particular, we approximate the solution as 
 \begin{align}
 u(x,y) \approx u_0^n Y_0 + u_{1,1}^n Y_{1,1} + u_{1,2}^n Y_{1,2}  + \sum_{k=2}^{n-1} \sum_{i=1}^3 u_{k,i}^n Y_{n,i} +  u_{n,2}^n Y_{n,2} + u_{n,3}^n Y_{n,3},  \label{eq:nevenexp}
 \end{align}
 if $n$ is odd (cf.~(\ref{eq:oddset})) and
\begin{align}\label{eq:noddexp}
 u(x,y) \approx \, & u_0^n Y_0 + u_{1,1}^n Y_{1,1} + u_{1,2}^n Y_{1,2} \\
     & + \sum_{k=2}^{n-1} \sum_{i=1}^3 u_{k,i}^n Y_{n,i} +  u_{n,2}^n Y_{n,2} + u_{n,3}^n Y_{n,3} + u_{n+1,3}^n Y_{n+1,3},  
\notag
\end{align}
 if $n$ is even (cf.~(\ref{eq:evenset})). It follows from (\ref{eq:oddset}) and (\ref{eq:evenset}) that an equivalent representation of the right-hand sides of (\ref{eq:nevenexp}) and (\ref{eq:noddexp}) is given by  the right-hand side of (\ref{eq:interpexp}) (for the basis consisting of only $p_n(w)$) or (\ref{eq:b1interpexp}) (for the basis constructed from $p_n(w)$ and $p_n(\phi w)$). The coefficients $u_{k,i}^n$ (or, equivalently, $a_{k,N}$ and $b_{k,N}$ in (\ref{eq:interpexp}) or (\ref{eq:b1interpexp})) are determined by solving a linear system so that the ODE is satisfied on a grid. We let the grid be $(x_{k,N},\pm y_{k,N})$, $k = 1, \ldots, N$, where the $x_{k,N}$ are the roots of $p_N(w)$ and $y_{k,N} = \sqrt{\phi(x_{k,N})}$. Recall that if $n = 2m + 1$, then $N = 3m +1$ and if $n = 2m$, then $N = 3m$.
 
To implement the collocation method, we use the fact that 
\begin{align}\label{eq:collder}
\frac{\mathrm{d}}{\mathrm{d}x}u(x,y)  &\approx \frac{\mathrm{d}}{\mathrm{d}x} \left[ \sum_{k=0}^{N-1} a_{k,N}p_k(w) + y\sum_{k=0}^{N-1} b_{k,N}p_k(w)     \right]\\
& = \sum_{k=0}^{N-1} a_{k,N}p_k'(w) + y\sum_{k=0}^{N-1} b_{k,N}\left(p_k'(w) + \frac{\phi'}{2\phi}p_k'(w) \right),  \notag 
\end{align}
and higher order derivatives can be derived recursively in a similar manner. Note that since the collocation points $x_{k,N}$ are in the interior of $\mathrm{supp}(w)$, $\phi(x_{k,N}) \neq 0$ and hence (\ref{eq:collder}) is well defined at the collocation points.

\subsubsection{Example 1: elliptic integral}
The elliptic integral
\begin{align*}
\int_{-1}^{1}\frac{\mathrm{d}x}{\sqrt{(x+1+\epsilon)(x+2)(x+3)}}, \qquad \epsilon > 0,
\end{align*}
which can be expressed in terms of $F(\alpha,k)$, Legendre's incomplete integral of the first kind~\cite[Ch.~19]{DLMF}, is the solution to the differential equation 
\begin{align}
y \frac{\mathrm{d}}{\mathrm{d}x} u(x,y) = 1, \qquad x \in [-1, 1], \qquad y^2 = \phi(x) = (x+1+\epsilon)(x+2)(x+3), \label{eq:ell_int_sol} 
\end{align}
subject to $u(-1,\sqrt{\phi(-1)}) = 0$ and evaluated at $(x,y) = (1,\sqrt{\phi(1)})$. For $\epsilon = 0$, the integral can be computed efficiently with Gauss--Jacobi quadrature because the singularity of the integrand at $x = -1$ is incorporated into the weight and the remainder of the integrand is analytic on $[-1, 1]$. If the singularity is close to $-1$, i.e., for $0 < \epsilon \ll 1$, Gauss--Jacobi quadrature is expensive because one needs a high degree polynomial to resolve the nearly singular integrand. By solving the differential equation on the curve $y^2 = \phi$, the computational cost of computing the integral is essentially independent of $\epsilon$ because the coefficients of the differential equation are polynomials (and hence analytic) on the curve for all $\epsilon \geq 0$. Figure~\ref{fig:ellipt_int} illustrates a solution to (\ref{eq:ell_int_sol}) which is accurate to 15 digits. 

\begin{figure}[h]
  \begin{center} 
    \includegraphics[width=0.495\textwidth]{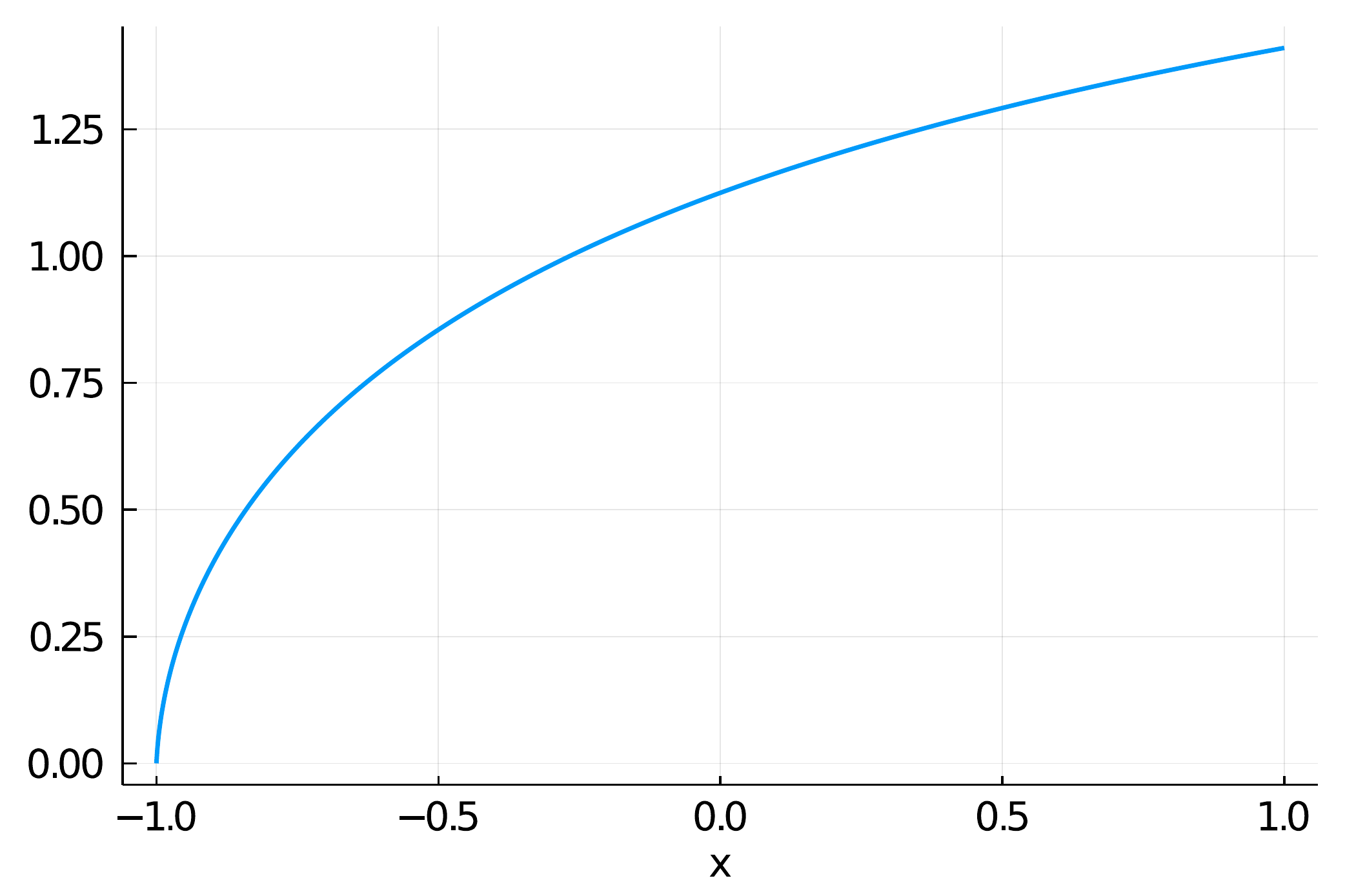}
    \includegraphics[width=0.495\textwidth]{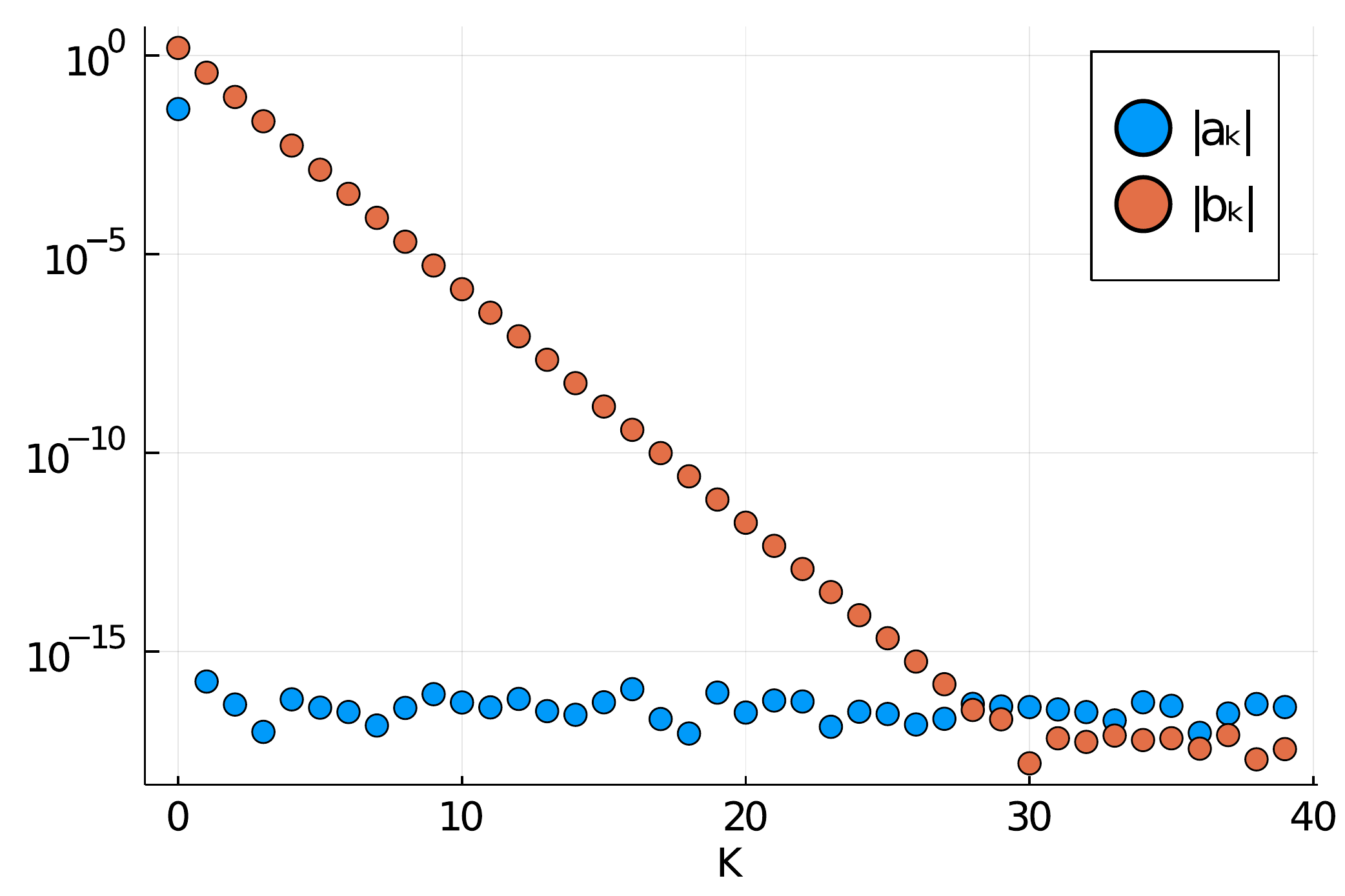}
  \end{center}
  \caption{Left:  Legendre's incomplete integral of the first kind computed by solving  \eqref{eq:ell_int_sol} with $\epsilon = 0.001$ subject to  $u(-1,\sqrt{\phi(-1)}) = 0$. Right: the magnitude of the coefficients $a_k$ and $b_k$, defined in (\ref{eq:interpexp}), of the interpolant that approximates the exact solution, obtained with a spectral collocation method. 
   }\label{fig:ellipt_int}
\end{figure}

\subsubsection{Example 2: variable coefficient, singular differential equation}
The function 
\begin{align*}
f(x) = \sin\left(c_1x + c_2x \sqrt{(1-x^2)(2-x)}\right), \qquad x \in [-1, 1],
\end{align*}
has square-root singularities at $x = \pm 1$. On the cubic curve
\begin{align*}
\g = \left\lbrace (x,y) : x\in [-1, 1],  y^2 = \phi(x) =  (1 - x^2)(2 - x)   \right\rbrace,
\end{align*}
the function becomes
\begin{align}
f(x) = u(x,y) = \sin(c_1x + c_2x y),  \label{eq:de2sol}
\end{align}
and satisfies a second order equation of the form (\ref{eq:gamdesing}) with
\begin{align}
\begin{split}
y\phi a_2(x,y) & = \frac{c_2 x}{2}\phi\phi' + c_2 \phi^2 + y c_1 \phi	 \in \Pi_6(\g), \\
y\phi a_1(x,y) & = -y\phi\left(\partial_x + \frac{\phi'}{2y} \partial_y\right) a_2(x,y) \in \Pi_5(\g), \\
y\phi a_0(x,y) & = y\phi a_2^3(x,y) \in \Pi_{9}(\g).
\end{split}\label{eq:decoeffs}
\end{align}
\begin{figure}[ht]
  \begin{center}
    \includegraphics[width=0.495\textwidth]{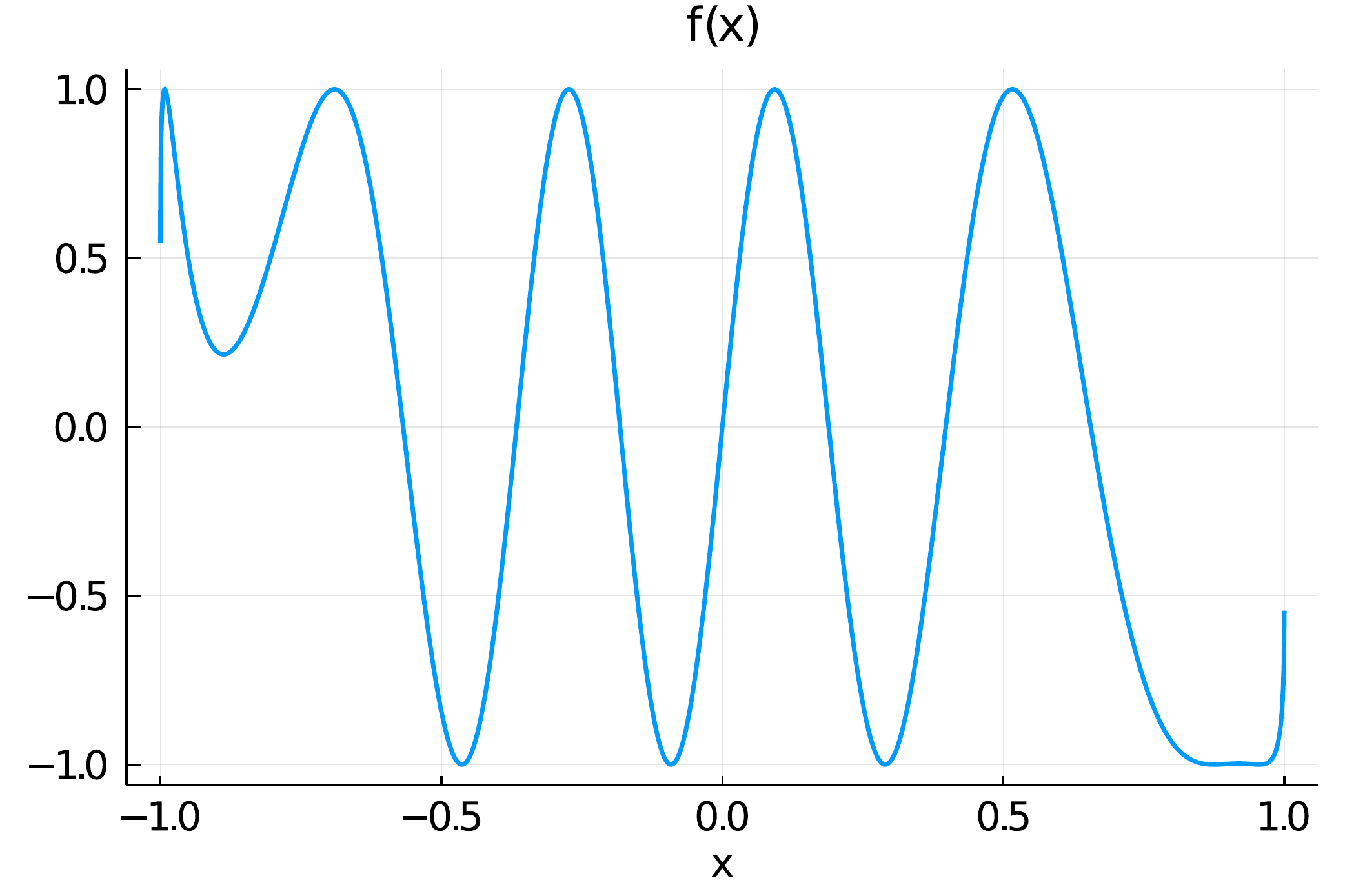}
    \includegraphics[width=0.495\textwidth]{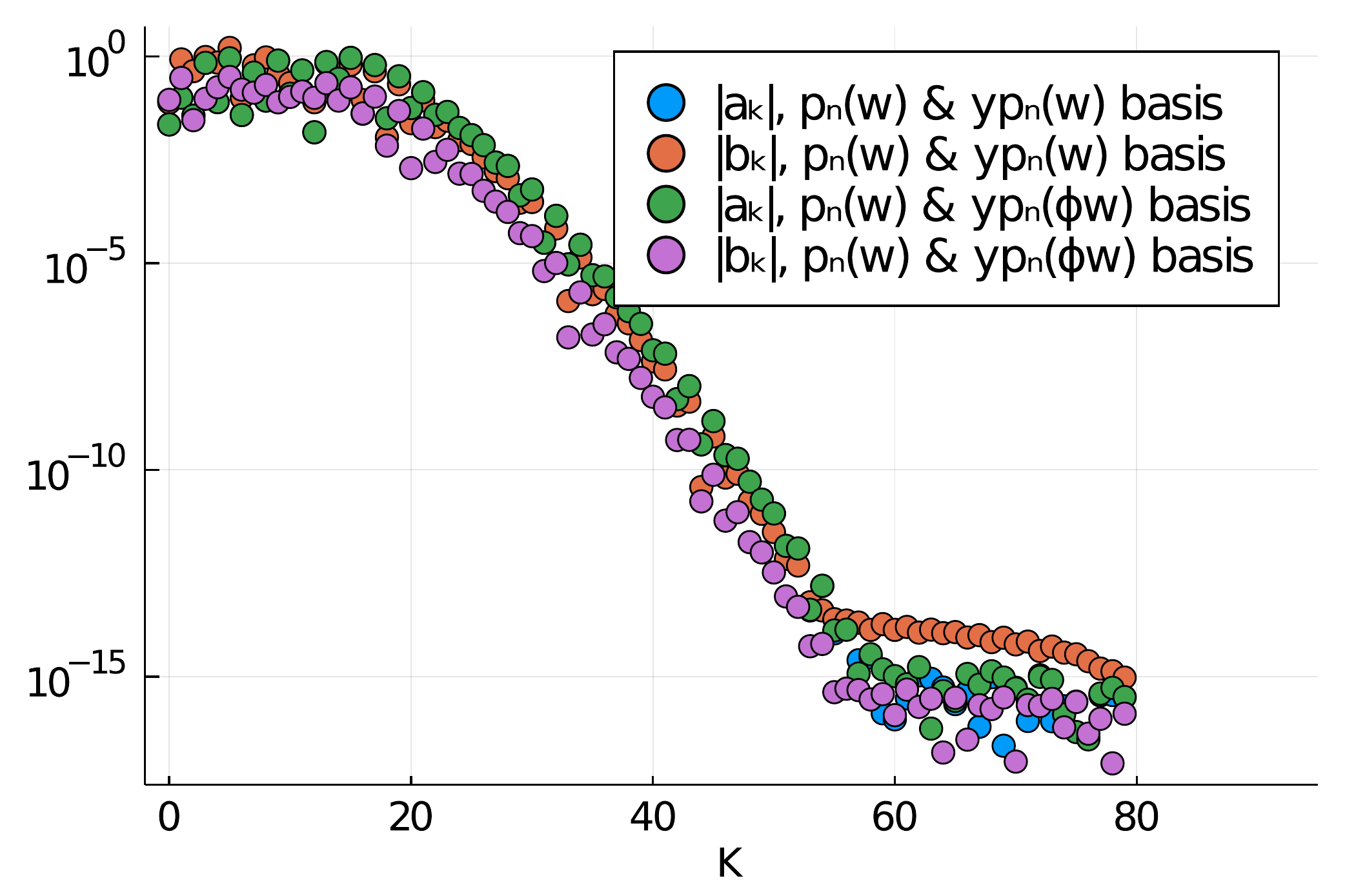}\\
    \includegraphics[width=0.6\textwidth]{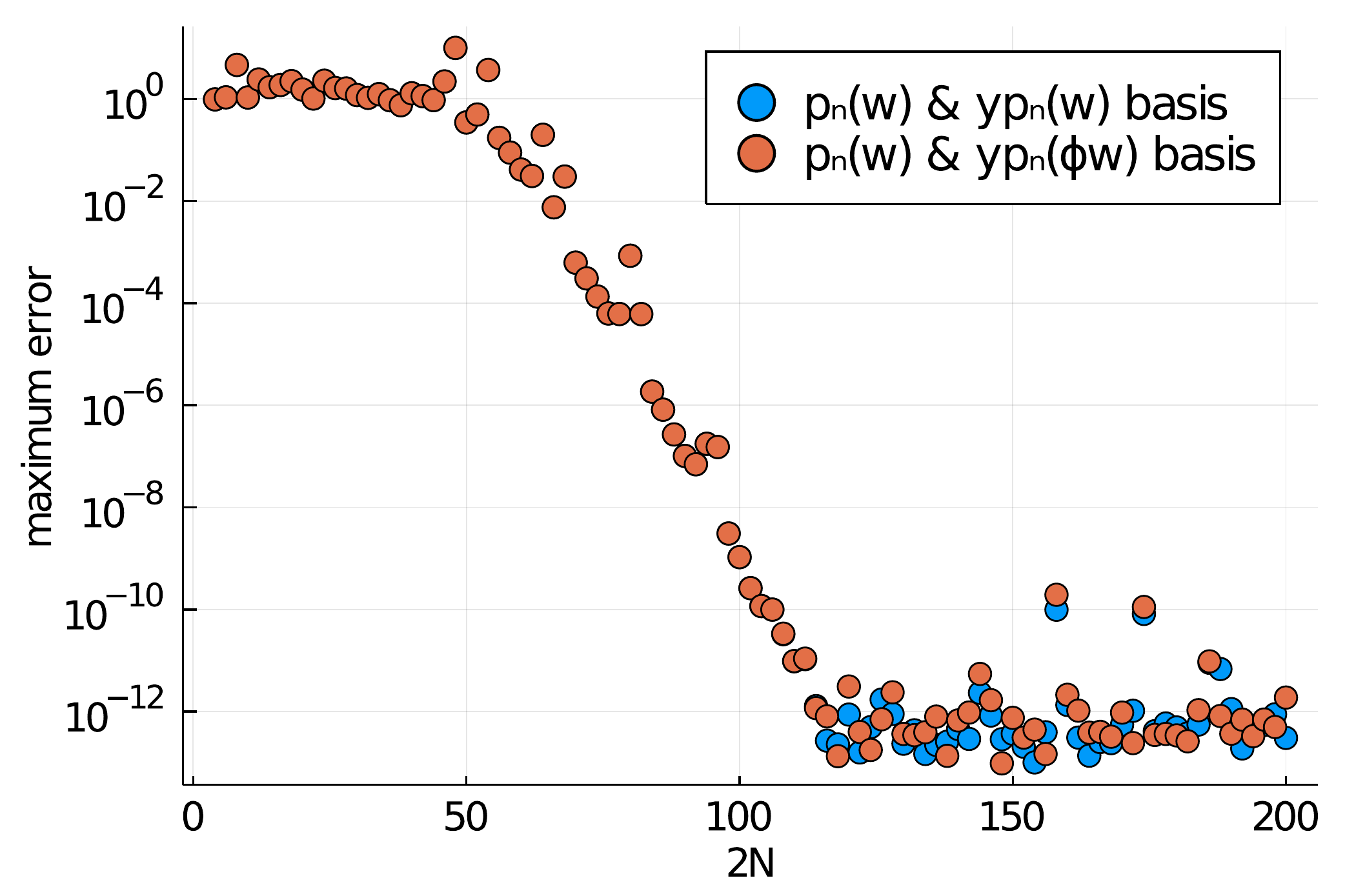}
  \end{center}
  \caption{Top-left: the function $\sin\left(c_1x + c_2x \sqrt{(1-x^2)(2-x)}\right)$ with $c_1 = 10$ and $c_2 = 5$, which is the solution to a second order differential equation of the form (\ref{eq:gamdesing}) with coefficients given in (\ref{eq:decoeffs}). Top-right: the coefficients (defined in (\ref{eq:interpexp}) and (\ref{eq:b1interpexp})) obtained by solving the differential equation with a collocation method in the basis consisting of (i) $p_n(w)$ and $yp_n(w)$ and (ii) $p_n(w)$ and $yp_n(\phi w)$ with $w = 1$. Bottom: the maximum error of the collocation solutions on $[-1, 1]$ with $2N$ coefficients.
  }\label{fig:singdesol}
  \end{figure}
Figure~\ref{fig:singdesol} shows $f(x)$ and the accuracy of the approximate solutions  obtained by solving the differential equation, subject to Dirichlet boundary conditions, with a collocation method using two orthogonal bases (one in the inner product space defined by $\la \cdot, \cdot \ra_{\g,w}$ and the other in the space defined by $[\cdot, \cdot]_{\g,w}$). The accuracy obtained with both bases are comparable and roughly four digits of accuracy are lost, most likely due to the large condition numbers of the matrices that arise in the collocation methods.
The coefficients decay super-exponentially because the solution is an entire function of $x$ and $y$ on the $\g$. By contrast, a conventional spectral method using an orthogonal basis on $[-1, 1]$ could only achieve algebraic convergence to the solution because of its singularities at $x = \pm 1$.

\section{Conclusion}

We have constructed orthogonal polynomials on cubic curves and demonstrated that they can be used for function approximation and solving differential equations. There are some clear questions left to explore:

\begin{enumerate}
\item Higher order algebraic curves. In the case where there is symmetry upon reflection across the $x$ axis we may be able to reduce to one-dimensional orthogonal polynomials, but a general construction is as-of-yet unclear.
\item Applications to partial differential equations. We mention that the construction of OPs on quadratic curves in \cite{OX2,OX3} led to numerical methods for partial differential equations on non-standard disk-slices and trapeziums in \cite{SO} and explicit formul\ae\ for inverting wave operators on a cone \cite{OX4}. This suggests that the OPs on cubic curves  introduced here may be applicable to solving partial differential equations whose boundaries are defined in terms of a cubic curves.
\end{enumerate}

\appendix

\end{document}